\newtheorem{assumption}{Assumption}
\newcommand{\hide}[1]{}
\newcommand{\diffd}{\delta}
\newcommand{\bx}{\mathbf{x}}
\newcommand{\yue}[1]{\ifthenelse{\boolean{showcomments}}
{ \textcolor{red}{(Yue says:  #1)}}{}}
\newcommand{\josh}[1]{\ifthenelse{\boolean{showcomments}}
{ \textcolor{red}{(Josh says:  #1)}}{}}
\newcommand{\zhenhua}[1]{\ifthenelse{\boolean{showcomments}}
{ \textcolor{red}{(Zhenhua says:  #1)}}{}}
\newcommand{\addcites}[0]{\ifthenelse{\boolean{showcomments}}
{ \textcolor{green}{(add citation(s))}}{}}
\newcommand{\addcite}[0]{\ifthenelse{\boolean{showcomments}}
{ \textcolor{green}{(add citation(s))}}{}}
\newcommand{\addref}[0]{\ifthenelse{\boolean{showcomments}}
{ \textcolor{green}{(add ref)}}{}}
\newcommand{\todo}[1]{\ifthenelse{\boolean{showcomments}}
{ \textcolor{red}{(To do:  #1)}}{}}
\newcommand{\fixes}[1]{\ifthenelse{\boolean{showfixes}}
{\textcolor{black}{#1}}{}}
\newcommand{\removed}[1]{\ifthenelse{\boolean{showremoved}}
{\textcolor{gray}{#1}}{}}
\newcommand{\revised}[1]{\ifthenelse{\boolean{showfixes}}
	{\textcolor{black}{#1}}{}}
\begin{document}
\title{Harnessing Flexible and Reliable Demand Response Under Customer Uncertainties}
%\title{Incentivizing reliable demand response with customers' uncertainties and capacity planning}
%\titlenote{Produces the permission block, and copyright information}
%\subtitle{Submission \#7}
%\subtitlenote{The full version of the author's guide is available as \texttt{acmart.pdf} document}

%\author{Submission \#7}
%\affiliation{\institution{Affiliation}}
%\email{email}

\author{Joshua Comden}
\affiliation{\institution{Stony Brook University}}
\email{joshua.comden@stonybrook.edu}

\author{Zhenhua Liu}
\affiliation{\institution{Stony Brook University}}
\email{zhenhua.liu@stonybrook.edu}

\author{Yue Zhao}
\affiliation{\institution{Stony Brook University}}
\email{yue.zhao.2@stonybrook.edu}

\begin{abstract}
Demand response (DR) is a cost-effective and environmentally friendly approach for mitigating the uncertainties in renewable energy integration by taking advantage of the flexibility of customers' demands. 
However, existing DR programs suffer from either low participation due to strict commitment requirements or not being reliable in voluntary programs.
In addition, the capacity planning for energy storage/reserves is traditionally done separately from the demand response program design, which incurs inefficiencies.
Moreover, customers often face high uncertainties in their costs in providing demand response, which is not well studied in literature.

This paper first models the problem of joint capacity planning and demand response program design by a stochastic optimization problem, which incorporates the uncertainties from renewable energy generation, customer power demands, as well as the customers' costs in providing DR.
We propose online DR control policies based on the optimal structures of the offline solution. %we propose two algorithms, prediction based policy and linear policy.
%In particular, the linear policy can be implemented in a 
A distributed algorithm is then developed for implementing the control policies without efficiency loss. We further offer enhanced policy design by allowing flexibilities into the commitment level. 
We perform real world trace based numerical simulations. Results demonstrate that the proposed algorithms can achieve near optimal social costs, and significant social cost savings compared to baseline methods. 

%the inflexibility of commitment levels.
%In particular, these programs can be split into two classes depending on whether customers are fully committed or fully voluntary to provide demand response. 
%Full commitment makes customers reluctant to participate, while the load serving entity (LSE) cannot rely on voluntary participation for reliability and dispatchability considerations. 

%This paper first highlights the inefficiency of existing DR programs in either extracting demand flexibilities from customers or helping the LSE to reduce the demand for energy storages/reserves that are costly and not environmental friendly.
%Motivated by this, we propose a generalized DR framework called Flexible Commitment Demand Response (FCDR) to allow for explicit choices of the level of commitment. 
%Then a distributed stochastic optimization framework is designed to achieve social optimum.

\end{abstract}

%
% The code below should be generated by the tool at
% http://dl.acm.org/ccs.cfm
% Please copy and paste the code instead of the example below. 
%
%\begin{CCSXML}
%<ccs2012>
 %<concept>
 % <concept_id>10010520.10010553.10010562</concept_id>
 % <concept_desc>Computer systems organization~Embedded systems</concept_desc>
 % <concept_significance>500</concept_significance>
 %</concept>
 %<concept>
 % <concept_id>10010520.10010575.10010755</concept_id>
 % <concept_desc>Computer systems organization~Redundancy</concept_desc>
 % <concept_significance>300</concept_significance>
 %</concept>
 %<concept>
 % <concept_id>10010520.10010553.10010554</concept_id>
 % <concept_desc>Computer systems organization~Robotics</concept_desc>
 % <concept_significance>100</concept_significance>
 %</concept>
 %<concept>
 % <concept_id>10003033.10003083.10003095</concept_id>
 % <concept_desc>Networks~Network reliability</concept_desc>
 % <concept_significance>100</concept_significance>
 %</concept>
%</ccs2012>  
%\end{CCSXML}

%\ccsdesc[500]{Concept~Subconcept}
%\ccsdesc[300]{Computer systems organization~Redundancy}
%\ccsdesc{Computer systems organization~Robotics}
%\ccsdesc[100]{Networks~Network reliability}

% We no longer use \terms command
%\terms{Theory}

%\keywords{Keywords}

\maketitle

\section{Introduction}
One of the major issues with the integration of renewable energy sources into the power grid is the increased uncertainty and variability that they bring~\cite{sgdoe}. 
%Additionally, 
The limited capability to accurately predict this variability makes it challenging for the load serving entities (LSEs) to respond to it~\cite{nrelvariability}. 
If this variability is not sufficiently addressed, it will limit the further penetration of renewables into the grid and even result in blackouts~\cite{doegridintegration}.

\begin{figure*}[!ht]
	\begin{center}
		\subfigure[Load (kW)]{{\includegraphics[width=0.60\columnwidth]{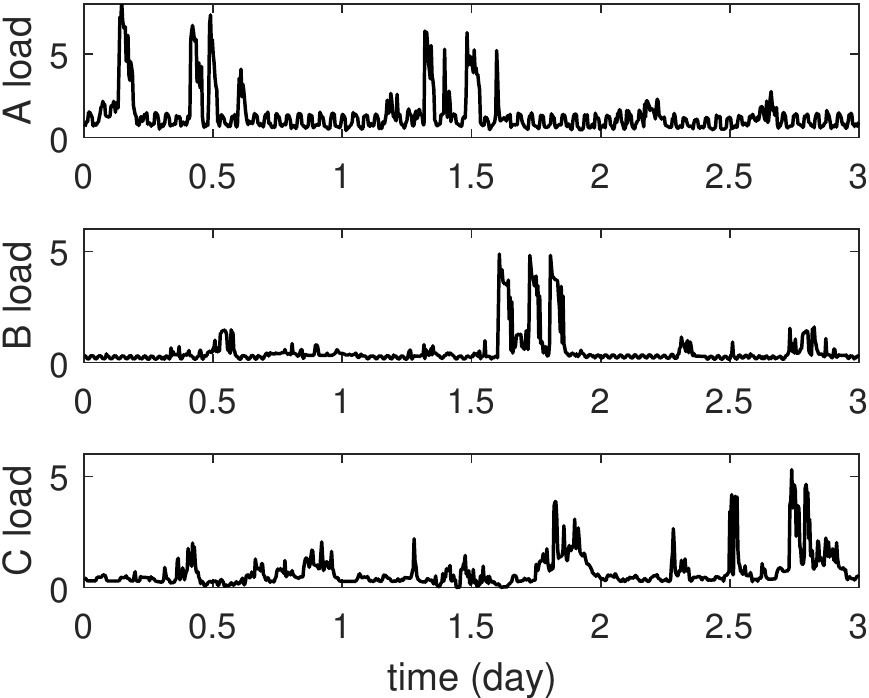}}
			\label{FIG:HOMESABCSAMPLE}}
		\subfigure[Cumulative distribution functions]{{\includegraphics[width=0.60\columnwidth]{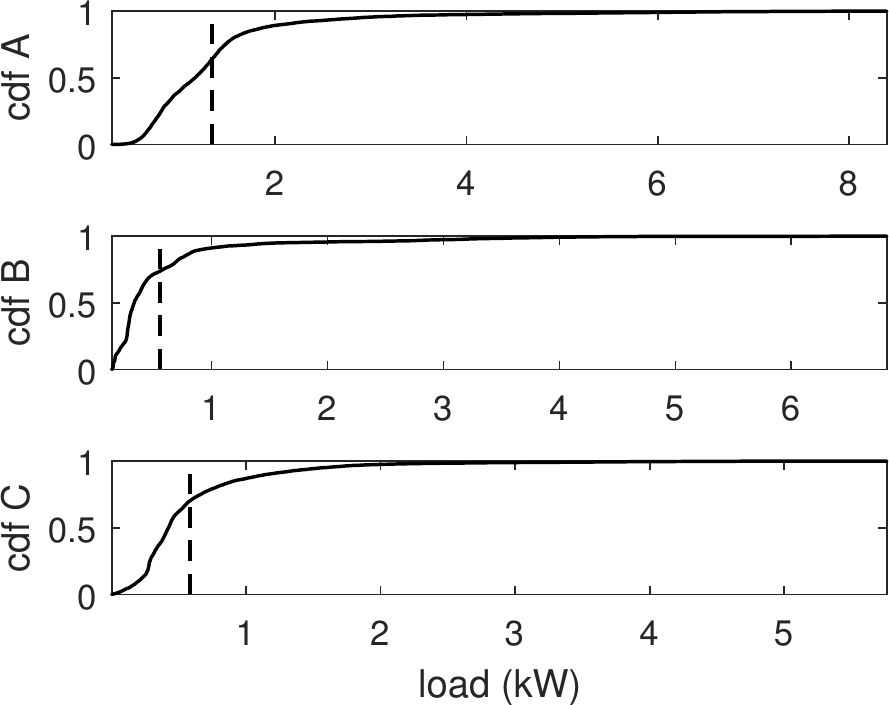}}
			\label{FIG:HOMESABCEMPCDF}}
		%
		%\subfigure[Normalized DR potential]{{\includegraphics[width=0.6\columnwidth]{rho_vs_potDR}}
		%	\label{FIG:HOMESABCPOTDR}}
		%
		\subfigure[395 Homes]{{\includegraphics[width=0.60\columnwidth]{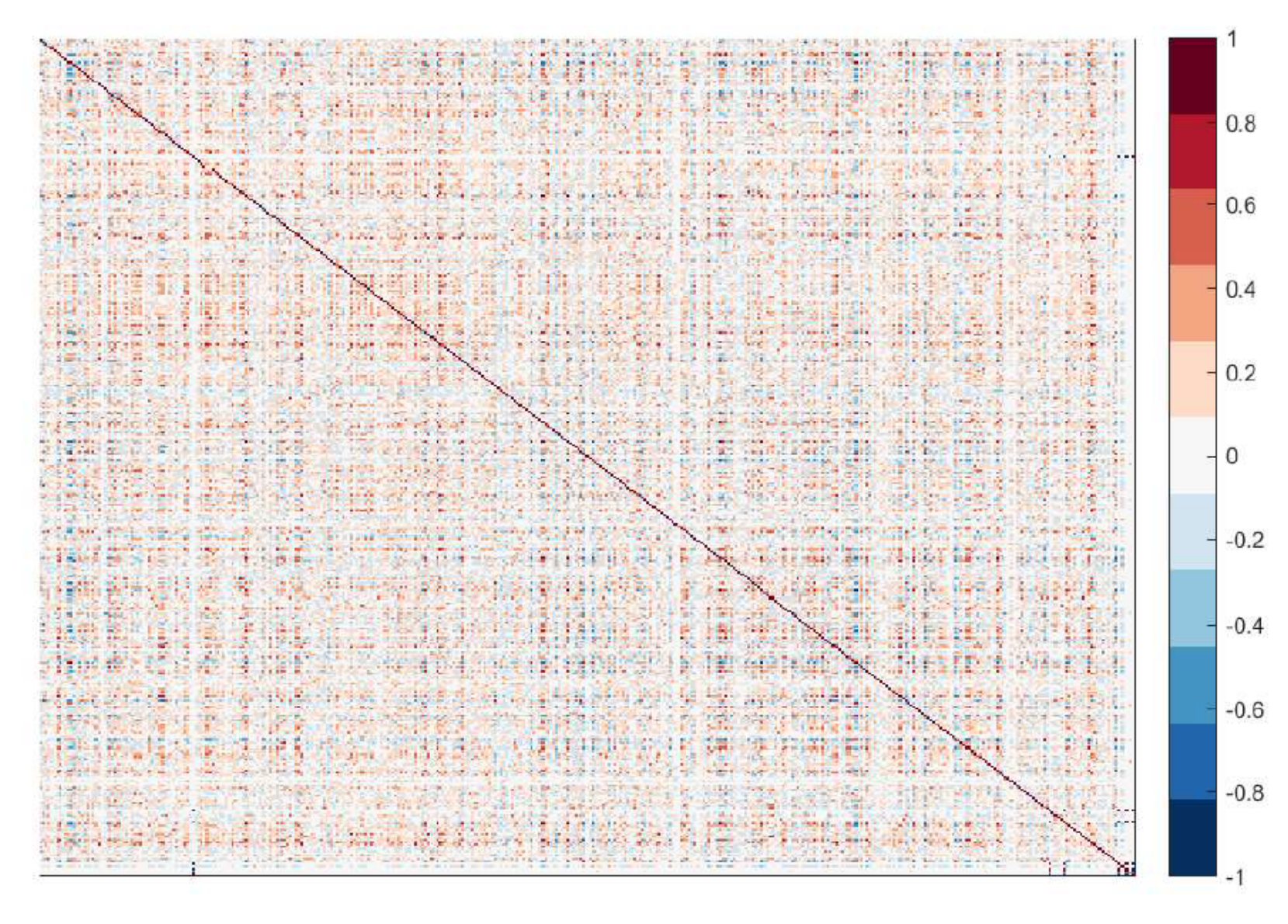}}
			\label{FIG:395CORR}}
		\vspace{-0.15in}
		\caption{For Homes A, B, and C: (a) Load (kW) trace in five-minute intervals from May 6-8, 2012, (b) Cumulative distribution of the five-minute loads (kW) of the 33 days along with the means (dashed lines). (c) Heatmap showing the correlation coefficient matrix for one day of loads from 395 buildings.}
		%(c) Normalized DR potential vs. customer commitment level $\rho$.}
		\label{f.DAHOMES}
	\end{center}
%\vspace{-0.05in}
\end{figure*}

\begin{figure*}[ht]
	\begin{center}
		\subfigure[Load mismatch (MW) and price]{{\includegraphics[width=0.59\columnwidth]{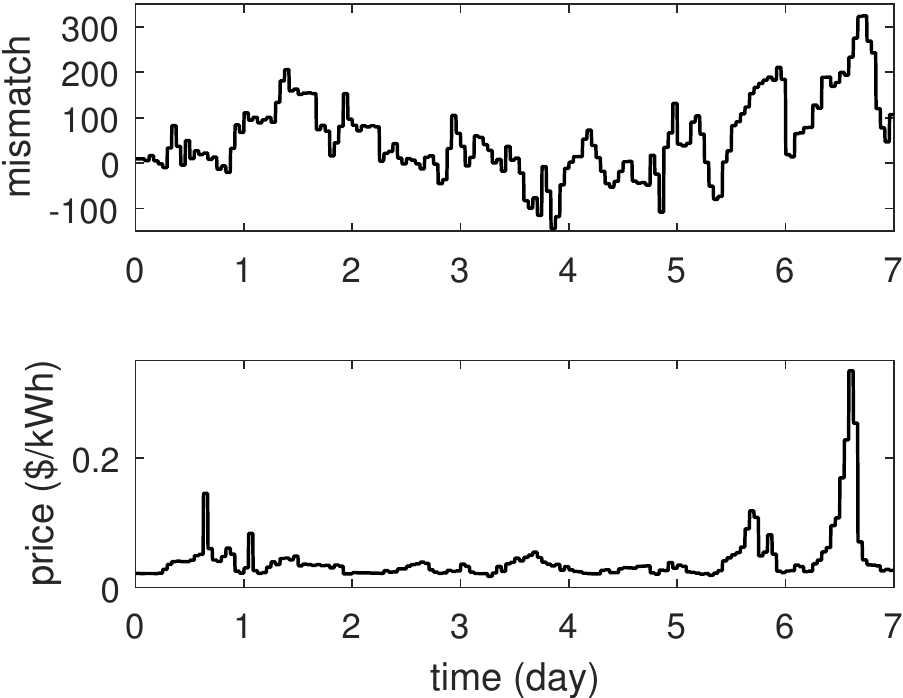}}
			\label{FIG:LSE7DAYS}}
		\subfigure[Cumulative distribution functions]{{\includegraphics[width=0.59\columnwidth]{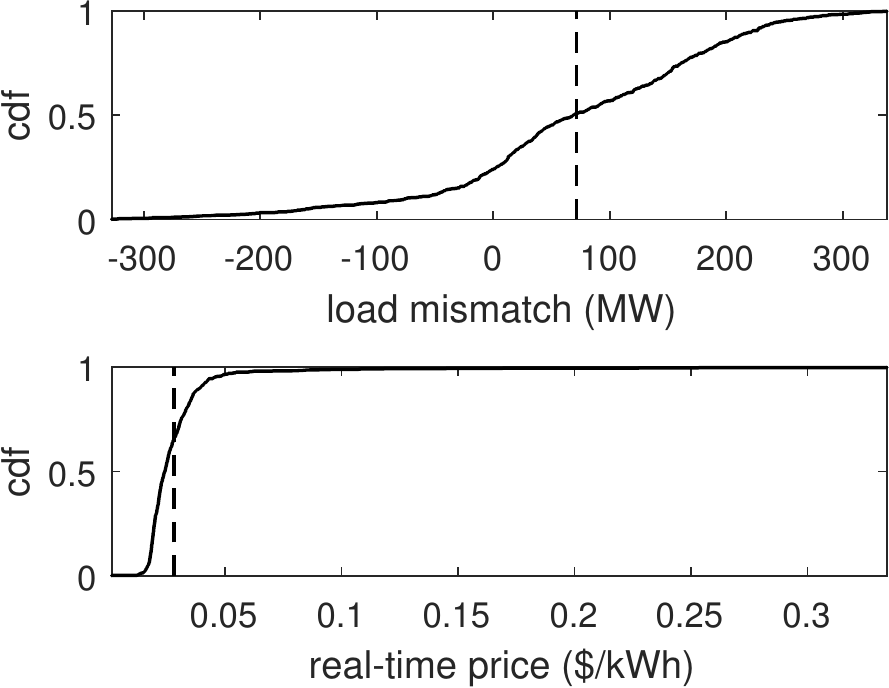}}
			\label{FIG:LSEEMPCCF}}
		\subfigure[Homes A, B, and C]{{\includegraphics[width=0.59\columnwidth]{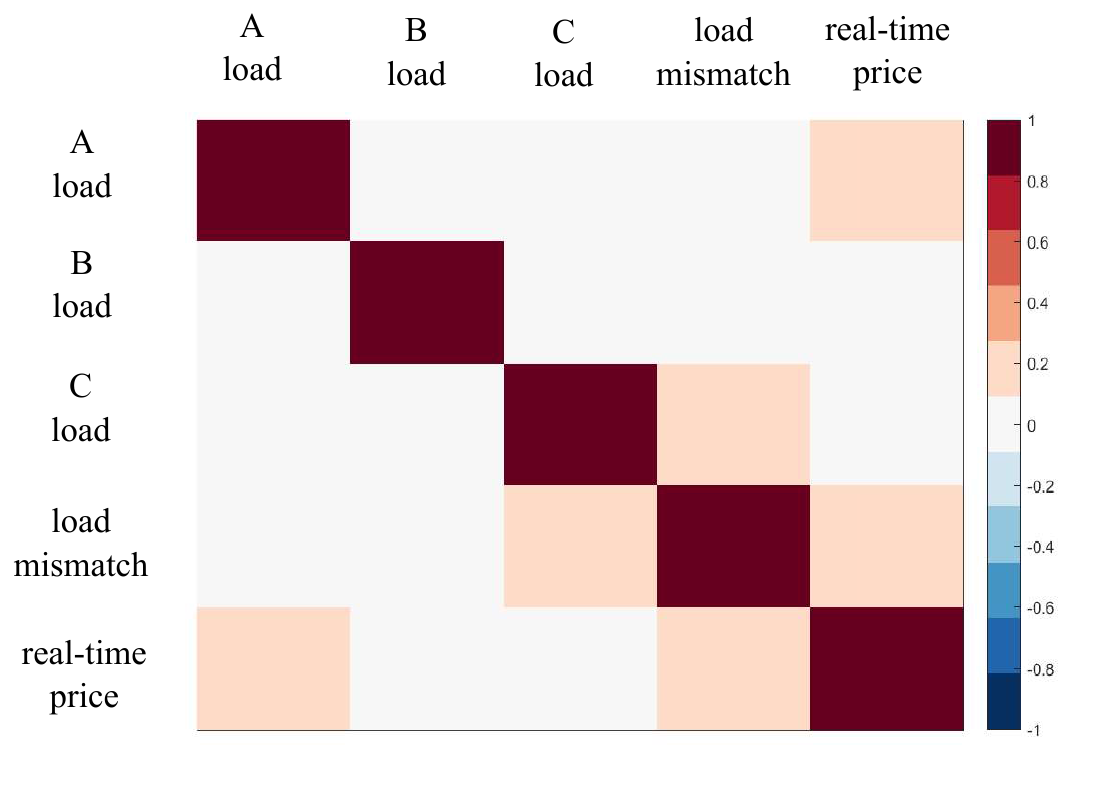}}
			\label{FIG:HOMESABCCORR}}
		\vspace{-0.15in}
		\caption{(a) The ISO-NE load mismatch (MW) between real-time load and day-ahead provisioned load for the Western Massachusetts zone, and the real-time market price from May 23-29, 2012. For the same data during a period of 33 days: (b) Cumulative distribution functions for the load mismatch of ISO-NE, and real-time price,
			%of the ISO-NE load mismatch between real-time load and day-ahead provisioned load for the Western Massachusetts zone, and the real-time market price of the 33 days.}
			(c) Heatmap showing the correlation coefficient matrix for of loads from homes A, B, C, the load mismatch of ISO-NE, and real-time price.}
		\label{f.DALSE}
	\end{center}
%\vspace{-0.05in}
\end{figure*}

% drawbacks of other approaches for renewable integration
Various approaches have been implemented or proposed to address this issue. These include improving renewable generation forecast \cite{Pinson2013}, aggregating diverse renewable sources \cite{ZQRGP14}, fast-responding reserve generators, energy storage \cite{bitar2011role, CRZJG16}, and demand response (DR) \cite{vardakas15}, among others.
In particular, in 2013, the California state legislature enforced a solution by passing a bill that requires 1,325 MW of grid energy storage by 2020~\cite{cpucrules, calilaw} and that declares ``Expanding the use of energy storage systems can assist ... in integrating increased amounts of renewable energy resources into the electrical transmission and distribution grid" \cite{calilaw}.  
In order for this solution to be cost-effective, the price of storage needs to be within the range of \$700-750/kWh.
However, in 2013 when the law was passed, prices were about three times that amount~\cite{silverstein13}. 
%The situation will become even worse if the %significantly 
%negative environmental impacts  %have been taken into considerations.
%are taken into account.

% motivation and background of DR
Compared to energy storage, demand response has advantages to provide reserves to the LSEs in a cost-effective and environmentally friendly way~\cite{frameworkroadmap,vardakas15}.
\fixes{Despite the great potential, there are cases where the increase in the amount of \emph{reliable} DR is much slower than that of renewable integration~\cite{wierman14}, e.g. California's move towards more grid-level energy storage.}
\removed{Despite the great potential, the increase in the amount of \emph{reliable} DR is much slower than that of renewable integration~\cite{wierman14}, partially evidenced by the California's move towards more grid-level energy storage.}
There are multiple reasons about this, but the level of DR commitment is an important factor.

% DR programs
Roughly speaking, there are two types of DR programs based on how much commitment customers need to make in the electric load reduction. 
In the first type, customers are required to make \emph{full} commitment in load reduction, e.g., regulations service~\cite{kirby2005frequency}, capacity bidding~\cite{capacitybiddingreport}.
As a result, it was hoped that the committed demand response can be used as a ``virtual'' energy storage to the power grid, and therefore significantly reduce or at least delay the purchase of additional energy storage. 
However, in such programs, customers have to take all the responsibilities of managing their uncertainties in meeting the hard commitment. 
As highlighted in Section~\ref{sec:cust-uncertain}, customers actually face significant uncertainties when making their decisions, so it is not surprising that the participation level in such committed programs is not high. 
%Since such a full commitment is often costly, a %much 
%higher payment is necessary to incentivize a customer to participate in DR programs than what are normally used in practice. %or they suffer from a higher cost.

In the latter type, customers do not need to make any commitments, and therefore are willing to participate in DR programs. Examples include emergency demand response programs \fixes{(NYISO \cite{emergencydrpNYISO})} and coincident peak pricing~\cite{CPP}.
The drawback, however, is that from the LSE's perspective this sort of ``voluntary'' demand response is not reliable or sufficiently dispatchable. 
As a result, the LSE still has to heavily rely on energy storage devices.
Readers can refer to Section~\ref{sec:bg} for more background information.

The tradeoff between commitment levels and reliability of demand response raises the following question: \textbf{how can we effectively incentivize the amount of \emph{reliable} demand response?}
This paper moves towards answering this question by making the following main contributions:

\begin{enumerate}[leftmargin=*]
\item We model the social cost minimization problem using stochastic optimization, and characterize the optimal solution in Sections~\ref{sec:model} and \ref{sec:char}. There are two novel features in our stochastic optimization model. First, the uncertainties on the customers' costs to provide DR are explicitly modeled. Second, the capacity planning for the amount of energy storage/reserve needed is jointly optimized with the demand response program design. 
\item Motivated by the optimal structures of the offline solution, we propose simple contracts between customers and LSE and corresponding DR control policies, namely, PRED and LIN, in Section~\ref{sec:alg}.
\fixes{These contracts incentivize customers to participate in DR by offering payments larger than their associated costs.}
We further design a distributed algorithm with guaranteed convergence to overcome the challenge in LIN that LSE may not have enough information about customers' cost functions, so that the policy remains practical in this case. To fully respect and also exploit customer uncertainties, we introduce flexible commitment levels into LIN by allowing limited violation of the contract in Section~\ref{sec:fcdr}.
\item Using real world traces, we evaluate and demonstrate the benefits of our proposed contracts/control policies in Sections~\ref{sec:linear-evaluation}. Our study demonstrates that a) it is essential to take into account the customers' uncertainties into DR program design, and b) optimizing capacity provisioning jointly with DR program design reduces social cost significantly. In particular, the following key insights are obtained: 
\begin{itemize}[leftmargin=*]
\item Simple control policies as we proposed can perform closely to the a-posteriori optimum, and greatly outperform benchmarks similar to the current practice. 
\item The required amount of energy storage/reserve capacity can be significantly reduced due to deeper extraction of DR resources. 
\item Optimizing flexible commitment levels takes into account customer uncertainties even better, and can further reduce the social cost to almost its fundamental limit (a-posteriori optimum). Moreover, as it brings benefits to both the customers and the LSE, both sides are incentivized to participate in the program. 
%\item While lower price and higher levels of DR commitment are usually considered to be conflicting with each other, our results highlight the (somewhat surprising) opportunity to achieve both with the optimal level of commitment.
%        \item We provide some insights about who should decide the level of commitment $\rho$. The result is particularly exciting as the optimal decision of LSE is near the social optimum. This means that we can rely on the LSE's (selfish/rational) decision to reach the social optimum.
%	\item The optimal level of commitment increases with larger DR amount, and varies based on the customers' demand characteristics.
\end{itemize}
\end{enumerate}

%\todo{update when we have more information and combine the following text}

%
%%{\color{blue}
%% Contributions.
%\emph{(i) The design of the Flexible Commitment Demand Response framework:} We propose a generalized framework for demand response programs called \emph{Flexible Commitment Demand Response} (FCDR).
%% Very high level descriptions of FCDR.
%Under FCDR, we explicitly allow an LSE to specify her requirement on the level of commitment from the customers by setting the value of a parameter $\rho$. 
%The two types of DR programs mentioned above are special cases of the proposed FCDR with $\rho=1$ and $\rho=0$, respectively. 
%We include the details of FCDR in Section~\ref{sec:fcdr}.
%
%\emph{(ii) Performance evaluations of FCDR:} We conduct numerical experiments to illustrate the benefits of allowing the possibility of different levels of DR commitment. 

%minimize social cost and close to opt, 
%joint capacity and DR, not as much capacity needed, 
%takes into account customer uncertainty, 
%LIN+ achieves near optimal cost. 

%The most related work in literature is probably \cite{harsha2013framework}. 
%\cite{harsha2013framework} studies the program where the LSE generates random numbers to decide whether a particular customer is required to provide the peak reduction.
%}

\section{Background}
\label{sec:bg}

In this section, we first demonstrate customers' uncertainties as well as the uncertainties that LSEs experience by investigating real world load and power market data. 
We then provide an overview of existing demand response programs categorized based on their levels of commitment. 
Namely, there are two major categories: fully committed DR programs, voluntary DR programs, as well as other programs in between such as voltage regulation services~\cite{pjmweb}. 
We then discuss the advantages and drawbacks of these DR programs in the context of customers with significant uncertainties.

%\vspace{-0.05in}
\subsection{Customer Uncertainties}
\label{sec:cust-uncertain}

%{\color{blue}

%\vspace{-0.05in}
%\subsection{Customer Loads}
%\vspace{-0.05in}

%To study the electricity consumption of residential consumers,
We analyze the Smart$^*$ Data Set obtained from the University of Massachusetts Trace Repository~\cite{barker2012smart} to demonstrate customer consumption uncertainties. 
The specific data we use include the load data from three different homes located in Western Massachusetts given in one-second intervals from 33 days between May 1, 2012 through June 11, 2012, as well as the loads of another 395 buildings for one complete day. %~\footnote{May: 12-14, 20-21, 30-31, and June: 1, 8 
%12-15, 25-26
%were not used because at least one home had incomplete data in any given day.}.
We average them into five-minute intervals and use these for our trace-based numerical studies. 
\fixes{Five-minute intervals is the data granularity required by the ISO-NE~\cite{ISONEdrguide2011} which is common among other DR programs~\cite{FERCdr5min}.}
A concurrent three day sample of the three homes is given in Figure \ref{FIG:HOMESABCSAMPLE}, which shows peak loads
%that are both overlapping and non-overlapping in time.
are non-overlapping in many cases. 
%\begin{figure}
%	\includegraphics[width=0.95\columnwidth]{}
%	\caption{Load (kW) trace of Homes A, B, and C in five-minute intervals from May 6-8, 2012}
%	\label{FIG:HOMESABCSAMPLE}
%\end{figure}
%Additionally, 
Figure \ref{FIG:HOMESABCEMPCDF} displays the empirical cumulative distribution function of the three homes over the 33 days along with their means.
%The size and shape of the distributions at high load levels 
The large peak-to-mean ratios indicate the significant uncertainties in customers energy consumptions. 
%potential benefit that a reduced commitment level for DR would bring for a customer.
%Qualitatively, a customer with a long thick tail (Home A) would receive a greater benefit from a reduced commitment level as compared to a customer with a short thin tail (Home B).
%Although in some cases when a customer has a thick short tail (Home C), it may be difficult to contrast.
%Also notice that the distance between the mean and its minimum load indicates a customer's DR potential.

%Although customers can have peak loads that happen simultaneously, we find that actually their 
Furthermore, it is observed that customers' loads are only weakly correlated, if at all.
The northwest corner of Figure \ref{FIG:HOMESABCCORR} shows a heatmap of the correlation matrix between the three homes which exhibits very little correlation.
%In addition to the three homes, the University of Massachusetts Trace Repository~\cite{barker2012smart} provides 
Similarly, for the loads of the other 395 buildings, %for one complete day in one-second intervals which we averaged into five-minute intervals. 
a heatmap of the correlation matrix between 395 %\yue{what's the reason for using 395 from 443?} 
%of the 
buildings
%~\footnote{One building had incomplete data and 47 had no change in load for the entire day so they were not included in our analysis.}
is shown in Figure \ref{FIG:395CORR} which gives evidence that most customers have loads which are only weakly correlated. 

\subsection{Load Serving Entity Uncertainties}
%Since the customer data comes from anonymous residents in Western Massachusetts,
We analyze the following data from the ISO New England for the Western Massachusetts load zone~\cite{isonezonalinformation} to demonstrate the market uncertainties LSE experience in their daily operation. 
The specific data is given in one-hour intervals of the real-time total load, load contracted in the day-ahead market, and the real-time market price for the same days as the previously described customer data. %(Homes A, B, and C).
We calculate the real-time load mismatch of supply and demand as the difference between the real-time load and day-ahead market provisioned load.
Figure \ref{FIG:LSE7DAYS} displays a one week sample of the load mismatch and its corresponding real-time price. 
The empirical cumulative distribution functions of the load mismatch and the real-time price for the 33 days are shown in Figure \ref{FIG:LSEEMPCCF}. The high peak-to-mean ratios indicate significant uncertainties in the load mismatch and prices that LSEs need to handle in real time. 
%In particular, the peak-to-mean ratio of the real-time price is 10 which indicates that DR can be extremely valuable for an LSE.
%Significant variabilities are observed in the load mismatch. %, and the real time price is reasonably correlated with the mismatch. 
%Although this sample hints that a spike in the price goes along with a spike in the load mismatch, 
In addition, the southeast corner of Figure \ref{FIG:HOMESABCCORR}
%displays a heatmap of the correlation matrix between them which
shows that they are weakly correlated over the sampled 33 days.
%Also note that the load mismatch is very weakly correlated with Homes A and B but is only weakly correlated with Home C.

%We split each hour's real-time market demands into twelve equal sizes for each five-minute interval and use that hour's real-time price as the price for each five-minute interval.  Those five-minute intervals are used in our trace-based simulations.
%Since there are 322,271 households in Western Massachusetts~\cite{uscensushouseholds} we scale down the real-time load mismatch proportionally to 1,000 households for the numerical simulations.

%\begin{figure}
%	\includegraphics[width=0.95\columnwidth]{}
%	\caption{The ISO-NE load mismatch (MW) between real-time load and day-ahead provisioned load for the Western Massachusetts zone, and the real-time market price from May 23-29, 2012.}
%	\label{FIG:LSE7DAYS}
%\end{figure}

%\begin{figure}
%	\includegraphics[width=0.95\columnwidth]{}
%	\caption{Cumulative distribution functions of the ISO-NE load mismatch between real-time load and day-ahead provisioned load for the Western Massachusetts zone, and the real-time market price of the training dataset.}
%	\label{FIG:LSEEMPCCF}
%\end{figure}

%}
\begin{figure}
	\includegraphics[width=0.85\columnwidth]{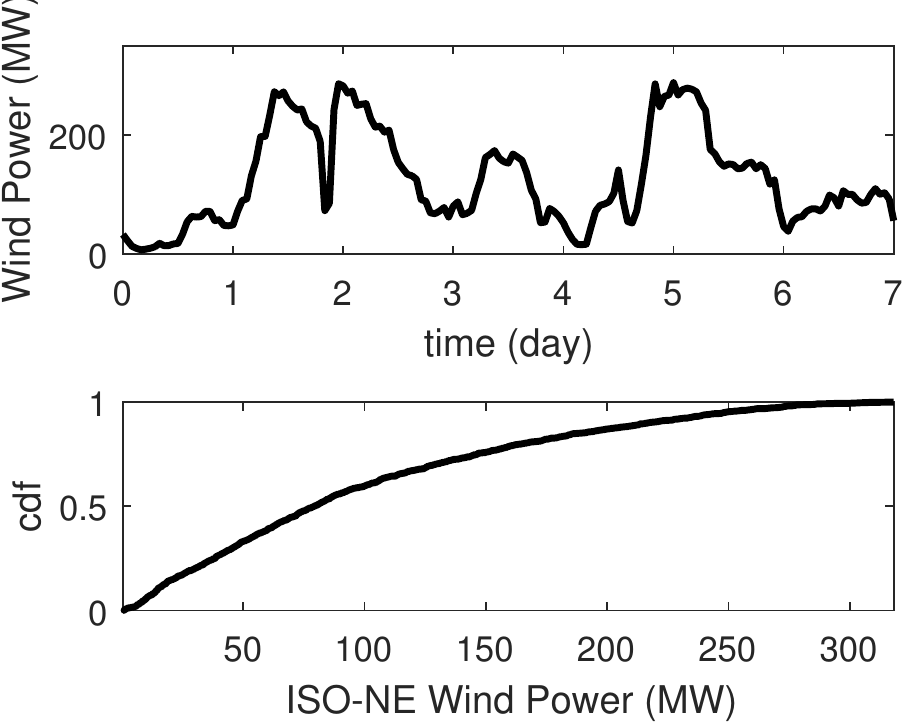}
	%\vspace{-0.1in}
	\caption{Hourly wind power data taken from the ISO-NE for the same dates as the UMass Homes: A one-week sample of the trace is given along with the empirical cumulative distribution of all 51 days.}
	%\vspace{-0.1in}
	\label{f.Wind_trace}
\end{figure}

%\vspace{-0.05in}
\subsection{A Dichotomy of Existing DR Programs}
%\todo{add cite to the following programs}
%\todo{Josh, please add one concrete example, as you did in your presentation. Some old texts are below:}
Existing DR programs in the current practice can be categorized by the commitment levels of the program participants. Most DR programs either demand the participants to fully commit to responding to DR signals, or allow the participants to not respond without any penalty at all.
In what we refer to as \emph{fully committed} DR programs, customers must respond to DR signals, and have to pay penalties if they fail to do so.
Examples include CAISO BIP (Base Interruptible Program) \cite{caisobip}, ERCOT ERS (Emergency Response Service) \cite{ercot}, and NYISO SCR (Special Case Resources) \cite{nyiso}.
On the other hand we refer to \emph{voluntary} DR programs as ones whose customers only voluntarily respond to DR signals in which ignoring DR signals is penalty-free.
Examples include ERCOT VLR (Voluntary Load Response) \cite{ercot}, NYISO EDRP (Emergency Demand Response Program) \cite{nyiso}, and CAISO DBP (Demand Bidding Program) \cite{caisodbp}.

In between these two extremes with regard to customer commitment are \emph{voltage regulation services} programs (e.g. PJM voltage regulation services~\cite{pjmweb,chenASPDAC}) which have relatively high payments for their fast control speed to follow a signal from the LSE~\cite{aikema2012data, chen2015optimizing}.
The most interesting part of regulation services programs is the flexibility in them which motivates our LIN$^+(\rho)$ program described in Section \ref{sec:fcdr}. 
Instead of requiring customers to strictly follow the signal, there is a predefined violation frequency limit $(100-\rho)\%$, i.e., customers can violate the signal up to $(100-\rho)\%$ of the time.
Therefore, customers do not need to pay the high penalty even if they have some violations from the signal, as long as the probability of violation is below the predefined level.
Intuitively, this can economically help the customers who may accidentally face high costs of DR and decide not to follow the signals. %which makes this program is attractive to customers in addition to the relatively high reward.

\subsection{Challenges of Customer Uncertainties}
We end this section by highlighting the advantages and drawbacks of different commitment levels of DR programs. For a fully committed DR program, a) it has the advantage of providing guarantees to the LSE in getting the expected DR responses, but b) as such, it does not provide any flexibility to the participating customers in all but responding to DR signals, even though they may unexpectedly face a difficulty in real time in responding. For a voluntary DR program, a) it has the advantage that the customers are protected from their risks of having difficulty to respond to DR signals, but b) the LSE has a hard time of getting any reliability guarantees of the DR they get from such programs. 

As a result, fully committed DR programs tend to enlist an insufficient amount of guaranteed DRs because customers are less willing to sign up  due to their inherent uncertainties. On the other hand, voluntary DR programs face a similar consequence but for a different reason: it's hard for them to obtain guaranteed DRs because customers can freely ignore any DR signals. 

\fixes{Recently, third-party curtailment service providers (CSPs)  and aggregators pooling together DR from several customers emerge as a promising opportunity~\cite{vsikvsnys2016dependency}. 
%Flexible demand schedules have been proposed as a way for CSPs to take better advantage of flexible loads with time dependencies~\cite{vsikvsnys2016dependency}.
Our work can be viewed as a solution for CSPs and aggregators to lessen DR commitment uncertainties. For instance, the optimization problem and proposed algorithms can be adopted by CSPs and aggregators to allocate DR to their customers in a more effective way.}
%However in practice, it is unclear whether the current success of CSPs comes from mitigating the commitment burden or simply better marketing~\cite{moran2010curtailment}.
%In the case of data centers with unreliable DR capacities, \cite{niu2016enabling} uses a coalition game to lessen the uncertainty of the aggregate DR capacity.
%Instead of focusing on aggregation to lessen DR commitment uncertainty, our approach is to focus on each customer individually and design new contract types that can increase more reliable DR over current practices.}

%\vspace{-0.05in}
\section{Model}
\label{sec:model}

%\todo{We need to list all assumptions, and provide good explanations to them. Additionally, for each result, we need to clearly state the assumption needed. The goal is to make our model/algorithms/results as general as possible.}

We consider a discrete-time model with time step duration normalized to 1, such that price and demand changes can be updated within a time slot. There is a (possibly long) 
interval of interest $t\in\{1, 2, ..., T\}$, where $T$ can be one month, one year or even longer.
Each timeslot represents the time needed to make changes to demand, which can be 5 minutes.
This timescale is consistent with similar DR approaches as in \cite{harsha2013framework,bitar12}.
%We first restrict our attention to one timeslot to make the presentation simpler.
%Whenever needed, we will add back the time dimension.

%%%%%%IN SIGMETRICS SUBMISSION:
%We consider a discrete-time model with time step duration normalized to 1, such that price and demand changes can be updated within a time slot. There is a (possibly long) interval of interest $t\in\{1, 2, ..., T\}$, where $T$ can be one month, one year or even longer.
%Each timeslot represents the time needed to make changes to demand, which can be 5 minutes.

There is an LSE who wishes to procure a total amount $D$ of load change.
This can be load reduction as usual, and has the additional generalizability to handle the case for load increase when too much renewable energy is generated and/or customers demand is lower than predicted.
The LSE serves a set of customers indexed by $i\in\{1, 2, ..., N\}$.
We ignore the power network constraints in this paper.
However, our model and algorithms can be extended with extra effort to incorporate those power network constraints.
In particular, in a power distribution network, the exact convex relaxation and convexification~\cite{low2014convex} can be applied to make the problem convex.
Therefore the approach proposed in this paper can be applied.
%%%%%%%%%%%%%%%%%%%%

\vspace{0.05in}
\noindent\textbf{Customers}
%\subsubsection*{Customers}

Let $d_i(t)$ be the total power demand of the appliances controlled by customer $i$ at time $t$. 
To model the power demand uncertainty for customers we let $d_i(t)$ be a random variable with lower and upper bounds $\underline{d}_i$ and $\overline{d}_i$ respectively.
The LSE will predict customers' demand to purchase power beforehand, e.g., in the day-ahead market.
We denote by $\hat{d}_i(t)$ the predicted power consumption of customer $i$ at time $t$. 

In the real time, customer $i$ observes its own real power demand $d_i(t)$ in the absence of a DR program and decides by $x_i(t)$ under a particular DR program as the amount of demand change from $d_i(t)$.
Here we use a positive $x_i(t)$ as reduction, and a negative value implies demand increase.
So the actual power consumption is $d_i(t)-x_i(t)$.
\fixes{Essentially, we are using the actual consumption as the baseline to measure DR.
Different baseline models can easily be incorporated with additional complexity.
Determining customer baselines for demand response is an active area of research~\cite{wijaya2014bias}.}

%%%%%%%%IN SIGMETRICS SUBMISSION:
%In the real time, customer $i$ observes the real power demand $d_i(t)$ and denote by $x_i(t)$ the amount of demand change from $d_i(t)$. 
%Here we use a positive $x_i(t)$ as reduction, and a negative value implies demand increase.
%So the actual power consumption is $d_i(t)-x_i(t)$.

To model the loss of utility caused by the change in power consumption $x_i(t)$ from the original demand $d_i(t)$, we assume there is a cost function $C_i(x_i(t); t)$. %\zhenhua{or $C_{i,t}(x_i(t))$?}\yue{I'm Ok with the current.}
The function is inherently different for different timeslots.
For instance, the customer may have some emergency tasks to finish at some timeslot, so changing the load is costly for that particular $t$.
\fixes{It is not necessary for the change in demand to be instantaneous and the ramping time would be determined by the specific DR program.
There are newly developed DR recommender systems (e.g. DR-Advisor \cite{behl2016dr}) that can be used by homeowners to manage DR commitments.}

As usual, we make the following mild assumption about the customer's cost/disutility function:
\vspace{-0.05in}
\begin{assumption}\label{as:CUSTconvex}
	$\forall i, \forall t$, $C_i(\cdot; t)$ is convex and differentiable with $C_i(0; t)=0$ and $C'_i(0; t)=0$.
\end{assumption}
\vspace{-0.05in}

Under this assumption, if the customers are left to decide their power consumption themselves without any demand response program, each customer will choose to consume at their power demand $d_i(t)$, i.e., $x_i(t)=0$.
The convexity assumption is consistent with the concavity assumption of customer utility functions as was done in \cite{li2011optimal,jiang11,bitar12,zhao2014optimal}.

%%%%%%IN SIGMETRICS SUBMISSION:
%Under this assumption, if the customers are left to decide their power consumption themselves without any demand response program, each customer will choose to consume at their power demand $d_i(t)$, i.e., $x_i(t)=0$.

In reality, this cost function may not be known until at (or just before) the time of consumption.
In some cases, the LSE needs to estimate the customers cost functions to set the appropriate price for demand response payment.
Here, we assume that Assumption \ref{as:CUSTconvex} also applies to the estimated customer cost function $\hat{C}_i(x_i; t)$.

A simple but widely used example is the quadratic function, i.e., $C_i(x_i(t); t)=a_i(t) x_i(t)^2$ which is explored further in Section \ref{sec:char_quad}. 
The uncertainty of the function is therefore represented by the randomness in its parameter $a_i(t)$.
While simple, quadratic cost functions are widely used in electricity market literature~\cite{allaz1993cournot, yao2007two, murphy2010impact, cai2013inefficiency, liu2014pricing}.

%%%%%%IN SIGMETRICS SUBMISSION:
%A simple but widely used example is the quadratic function, i.e., $C_i(x_i(t); t)=a_i(t) x_i(t)^2$. 
%The uncertainty of the function is therefore represented by the randomness in its parameter $a_i(t)$.

%%%%%%%%%%%%%%%%%%%%
\vspace{0.05in}
\noindent\textbf{Load Serving Entity}
%\subsubsection*{Load Serving Entity}

We consider the general case where the LSE has volatile renewable energy generation that must be used when it is produced.
The generation at time $t$ is denoted by a random variable $r(t)$ bounded between $\underline{r}$ and $\overline{r}$.

The LSE procures power beforehand according to its estimation on customers' demand $\sum_i\hat{d}_i(t)$ and renewable generations $\hat{r}(t)$ for timeslot $t$.
We assume the procurement is $\sum_i\hat{d}_i(t)-\hat{r}(t)$, while our model and approaches can be easily extended to handle other form of procurement.

After the power procurement, LSE is responsible for balancing in real time the power demand, which is the aggregate customer power consumption $\sum_i d_i(t)$ and the power supply, which consists of both the power procured beforehand, e.g., $\sum_i\hat{d}_i(t)-\hat{r}(t)$ plus the available renewable power $r(t)$.
%\zhenhua{need to add real-time market}\yue{RT market can be embodied in $C_g$}

Therefore, the demand response goal is to clear the mismatch due to prediction errors, i.e.,
\begin{align}
	D(t) & =\sum_i d_i(t)-\sum_i(\hat{d}_i(t)-\hat{r}(t)+r(t))\nonumber \\
	 & =\sum_i (d_i(t)-\hat{d}_i(t))-(r(t)-\hat{r}(t)).\nonumber
\end{align}

\vspace{-0.05in}

We denote by $\delta_i(t) := d_i(t)-\hat{d}_i(t)$ and $\delta_r(t) := r(t)-\hat{r}(t)$ the prediction errors for customer $i$'s demand and the renewable generation, respectively.

Then
\vspace{-0.05in}
\begin{align}
	D(t)=\sum_i \delta_i(t) - \delta_r(t). \nonumber
\end{align}
\vspace{-0.05in}

In many cases, the LSE cannot get exactly $D(t)$ amount of demand response, and has to bear the cost denoted by the penalty function $C_g(\Delta(t))$, where $\Delta(t)=D(t)-\sum_i x_i(t)$.
Specifically, this is the cost imposed on the LSE to close the gap through actions such as employing fast responding reserves or grid energy storage.
We note that the cost of reducing some of the mismatch via hour-ahead (or other near-real-time) power market interactions can be incorporated in $C_g(\Delta(t))$. 
Again, we make the following mild assumption:

%%%%%%IN SIGMETRICS SUBMISSION:
%In many cases, the LSE cannot get exactly $D(t)$ amount of demand response, and has to bear the cost denoted by the penalty function $C_g(\Delta(t))$, where $\Delta(t)=D(t)-\sum_i x_i(t)$.
%We note that the cost of reducing some of the mismatch via hour-ahead (or other near-real-time) power market interactions can be incorporated in $C_g(t)$. 
%Again, we make the following mild assumption:

\vspace{-0.05in}
\begin{assumption}\label{as:LSEconvex}
	$C_g(\cdot)$ is convex and differentiable with $C_g(0)=0$ and $C'_g(0)=0$.
\end{assumption}
\vspace{-0.05in}

This convexity assumption for the LSE's cost was also made \cite{li2011optimal,jiang11}.
%and is more general than the linear assumption made in \cite{harsha2013framework, bitar12}

In order for the LSE to tolerate the mismatch and prevent blackouts, the LSE must purchase long-term energy storage or reserves in some forward market denoted by $\kappa$. 
This gives us the mismatch constraint\footnote{This constraint can be generalized to the case where the upper and lower bounds are function of $\kappa$, and our approaches apply with little change.}:

%%%%%%%IN SIGMETRICS SUBMISSION:
%In real time, in order to tolerate the mismatch to prevent blackouts, the LSE must purchase energy storage in the long term or reserves in some forward market. 
%This gives us the mismatch constraint\footnote{This constraint can be generalized to the case where the upper and lower bounds are function of $\kappa$, and our approaches apply with little change.}:

\vspace{-0.2in}
\begin{align}
	-\kappa \leq \Delta(t) \leq \kappa, \forall t. \label{const:modelCAP}
\end{align}
Notice that LSE's decision for the capacity $\kappa$ is constant for all $t\in\{1,2,...,T\}$
\fixes{where the time horizon $T$ is can be set to the time for deciding the capacity, e.g. a day, month, etc.}
Denote $C_{\text{cap}}(\kappa)$ as the cost of the energy storage/reserve capacity $\kappa$ amortized to the duration of interests $T$.
Again, we make the following mild assumption:

%%%%%%IN SIGMETRICS SUBMISSION:
%Denote by $\kappa\geq 0$ the capacity and $C_k(\kappa)$ the cost of the energy storage/reserve amortized to the duration of interests $T$.
%Again, we make the following mild assumption:

%\begin{figure}
%	\includegraphics[width=0.95\columnwidth]{}
%	\caption{DR decision timescales: At the beginning of each month, the LSE purchases energy storage/reserve capacity $\kappa$.  Every five minutes, each customer decides its individual load reduction $x_i$.  Consequently, the LSE experiences a total load reduction $\sum_ix_i$ from its load mismatch $D$.}
%	\label{f.OPT_timescale}
%\end{figure}
\vspace{-0.05in}
\begin{assumption}\label{as:kappaconvex}
	$C_{\text{cap}}(\cdot)$ is increasing, convex and differentiable.
\end{assumption}
\vspace{-0.05in}

For instance, $C_g(\cdot)$ can be a quadratic function $A\Delta(t)^2$, and $C_k(\kappa)$ can be linear $c\kappa$. While simple, quadratic cost functions are widely used in generation cost modeling~\cite{park1993economic,gaing2003particle,hug2012generation}.

%%%%%%%%%%%%%%%%%%%%
\noindent\textbf{Optimization Problem}
%\subsubsection*{Optimization Problem}

\fixes{The expected social cost can be represented by 
\begin{align}
	C_{\text{cap}}(\kappa) + \mathbb{E}_{\boldsymbol{\diffd},\delta_r,C_i(\cdot)}\left[\sum_t\left[\sum_{i}C_i(x_i(t); t)+C_\text{g}\left(D(t)-\sum_{i}x_i(t)\right)\right]\right].\nonumber
\end{align}
where we assume that the randomness in customers' cost function $C_i(\cdot;t)$ and the mismatch $D(t)$ are stationary.
This assumption is reasonable since the randomness in $D(t)$ is due to the \emph{prediction error} of the customers' load demands and renewable energy supply.
Additionally, this assumption is intuitive for customers whose underlying load preference behavior does not change significantly within the time horizon $T$.
Therefore, we can remove the time dependencies and simplify the expected social cost to:}
\removed{The social cost can be represented by 
\begin{align}
	C_{\text{cap}}(\kappa) + \sum_t\left[\sum_{i}C_i(x_i(t); t)+C_\text{g}\left(D(t)-\sum_{i}x_i(t)\right)\right].\nonumber
\end{align}
Note this function is \emph{random} due to the randomness in customers' cost function $C_i(\cdot;t)$ and the mismatch $D(t)$ where we assume that both sources are stationary.
This assumption is reasonable since the randomness in $D(t)$ is due to the \emph{prediction error} of the customers' load demands and renewable energy supply.
Additionally this assumption is intuitive for customers whose underlying load preference behavior does not change significantly within the time horizon $T$.}
%%%%%%%IN SIGMETRICS SUBMISSSION:
%Note this function is \emph{random} due to the randomness in customers' cost function $C_i(\cdot;t)$ and the demand response goal $D(t)$.

%\fixes{Since the uncertainty of the system can be considered ergodic, we can use the expectation to approximate the social cost:}
\removed{In practice, as the duration $T$ is fairly large, we can use the expectation and the law of large numbers to approximate the social cost:}

%%%%%%%IN SIGMETRICS SUBMISSSION:
%In practice, as the duration $T$ is fairly large, we can use the following formula to approximate the social cost: 
\vspace{-0.2in}
\begin{align}
	C_{\text{cap}}(\kappa) + \mathbb{E}_{\boldsymbol{\diffd},\delta_r,C_i(\cdot)}\left[\sum_{i}C_i(x_i)+C_\text{g}\left(D-\sum_{i}x_i\right)\right] \label{eq:expected-cost}
\end{align}
where $C_{\text{cap}}(\kappa)$ is amortized to a single timeslot.

The goal is therefore to decide the capacity planning $\kappa$ and a practical policy $\mathbf{x}(\boldsymbol{\diffd},\delta_r)$ \emph{simultaneously} to optimize the \emph{expected} social cost \eqref{eq:expected-cost}.
\begin{subequations}\label{opt:general}
	\begin{align}
		\min_{\kappa,\mathbf{x}(\boldsymbol{\diffd},\delta_r)} & C_{\text{cap}}(\kappa)\nonumber \\
		&+\mathbb{E}_{\boldsymbol{\diffd},\delta_r,C_i(\cdot)}\left[\sum_{i}C_i(x_i(\delta_i,\delta_r))+C_\text{g}\left(D-\sum_{i}x_i(\delta_i,\delta_r)\right)\right] \nonumber \\
		\text{s.t. } & \max_{\boldsymbol{\diffd},\delta_r}\left\{D-\sum_{i}x_i(\delta_i,\delta_r)\right\}\leq \kappa \label{const:CAPpolicy1} \\
		&\min_{\boldsymbol{\diffd},\delta_r}\left\{D-\sum_{i}x_i(\delta_i,\delta_r)\right\}\geq -\kappa. \label{const:CAPpolicy2}
	\end{align}
\end{subequations}

We note that \eqref{const:CAPpolicy1} and \eqref{const:CAPpolicy2} are worst-case constraints. 
To optimize the policy is known to be challenging, so we first provide the upper and lower bound to this optimization.

\noindent\textbf{Upper bound: SEQ}

The upper bound is the two-stage policy used widely in practice which first obtains capacity assuming the worst-case mismatch due to prediction error and then sets a price on voluntary DR. 
We call it ``SEQ'' in this paper to highlight it optimizes sequentially instead of simultaneously.
In the first stage, SEQ performs capacity planning to obtain $\kappa$ by solving the following optimization problem:

%%%%%%%%%IN SIGMETRICS SUBMISSION:
%The upper bound is the two-stage policy used widely in practice. 
%We call it ``SEQ'' in this paper to highlight it optimizes sequentially instead of simultaneously.
%In the first stage, SEQ performs capacity planning to obtain $\kappa$ by solving the following optimization problem:
\vspace{-0.1in}
\begin{align}
	\min_{\kappa}\text{ } & C_\text{cap}(\kappa) \label{opt:seq-1} \\
	\text{s.t. } & \max_{\boldsymbol{\diffd},\delta_r} \left\{D\right\}\leq \kappa \nonumber \\
	& \min_{\boldsymbol{\diffd},\delta_r} \left\{D\right\}\geq -\kappa \nonumber
\end{align}

Then in real-time, SEQ sets price to extract demand response from customers.
This is to mimic the voluntary demand response programs such as NYISO EDRP. %emergency demand response.

Formally, SEQ works as:

\vspace{0.05in}
\noindent
\fbox{\begin{minipage}{26em}
\textbf{SEQ:}
\begin{itemize}[leftmargin=0.25in]
\item Solve \eqref{opt:seq-1} to get $\kappa^{SEQ}$ for capacity planning;
\item In real time, pick a function $p(D; \kappa^{SEQ})$ to decide the demand response payment price $p$ when observing a mismatch $D$.
\end{itemize}
\end{minipage}}
\vspace{0.05in}

Note how to pick the function highly depends on the experience and expertise of the LSE staffs. In Section~\ref{sec:alg}, we propose a data-driven approach to obtain $p(D)$.

\noindent\textbf{Lower bound: OFFLINE}

A lower bound on the minimum social cost is given by the \emph{offline/a-posteriori} optimal solution. Specifically, the offline optimum is given by the following:
\begin{subequations}\label{opt:OFFLINE1}
	\begin{align}
		\min_{\kappa}\text{ } & C_\text{cap}(\kappa) + \mathbb{E}_{\boldsymbol{\delta},\delta_r}\min_{\mathbf{x}(t)}\left\{\sum_{i}C_i(x_i(t);t)+C_\text{g}\left(D(t)-\sum_{i}x_i(t)\right)\right\} \nonumber \\
		\text{s.t. } & -\kappa\leq D(t)-\sum_{i}x_i(t)\leq\kappa,\quad \text{for each realization}~t. \label{const:Capacity}
	\end{align}
\end{subequations}

Note that, the minimization over $\mathbf{x}(t)$ is performed \emph{inside} the expectation, meaning that it is performed after observing the realizations of the random variables. This results in the fact that the offline optimum can never be beaten by any online policy $\mathbf{x}(\delta_i, \delta_r)$. 

Formally, OPT works as:

\vspace{0.05in}
\noindent
\fbox{\begin{minipage}{26em}
\textbf{OPT:}
\begin{itemize}[leftmargin=0.25in]
\item Solve \eqref{opt:OFFLINE1} to get $\kappa^{*}$ for capacity planning;
\item In real time, solve the inner minimization over $\mathbf{x}(t)$ in \eqref{opt:OFFLINE1} to get $\mathbf{x}^*(t)$.
\end{itemize}
\end{minipage}}
\vspace{0.05in}

\section{Characterizing the optima}
\label{sec:char}

In this section, we provide the characterization of the optimal solution to reveal special structures that we take advantage of in our algorithm design (Section \ref{sec:alg}).
We start with the convexity of the problem followed by a concrete case study of the necessary and sufficient conditions of the optimal solution.

%%%%%%%IN SIGMETRICS SUBMISSION:
%In this section, we provide the characterization of the optimal solution. 
%We start with the convexity of the problem, followed by the necessary and sufficient conditions for the optimal solution.
%We end this section with a concrete case study.

%%%%%%%%%%%%%%%%%%%%
\vspace{-0.05in}
\subsection{Convexity}

The first key result regarding the problem is the convexity, as stated formally in Theorem~\ref{th:offline-convexity}.
The convexity is crucial for our proposed algorithm in Section~\ref{sec:alg}.

\begin{theorem}
\label{th:offline-convexity}
	\eqref{opt:OFFLINE1} is a convex optimization problem over $\kappa$. %\yue{over $\kappa$ to be exact?}
\end{theorem}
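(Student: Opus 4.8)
The plan is to reduce the statement to three elementary facts of convex analysis: that partial minimization of a jointly convex function over a jointly convex feasible set yields a convex value function, that expectation preserves convexity, and that a sum of convex functions is convex. First I would fix an arbitrary realization of the random data $(\boldsymbol{\delta},\delta_r)$ together with the cost functions $C_i(\cdot)$, which fixes the scalar $D$ and the functions $C_i$, and define the inner value function
\[
V(\kappa) := \min_{\mathbf{x}} \left\{ \sum_i C_i(x_i) + C_g\!\left(D - \sum_i x_i\right) \; : \; -\kappa \le D - \sum_i x_i \le \kappa \right\}.
\]
The objective of \eqref{opt:OFFLINE1} is then exactly $C_{\text{cap}}(\kappa) + \mathbb{E}_{\boldsymbol{\delta},\delta_r}\!\left[V(\kappa)\right]$, so it suffices to show each piece is convex in $\kappa$.

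The key step is the convexity of $V(\kappa)$, which I would obtain by viewing the inner problem as a minimization in the joint variable $(\mathbf{x},\kappa)$. The inner objective $g(\mathbf{x}) = \sum_i C_i(x_i) + C_g\!\left(D - \sum_i x_i\right)$ does not depend on $\kappa$ and is convex in $\mathbf{x}$: each $C_i$ is convex by Assumption~\ref{as:CUSTconvex}, and $C_g$ precomposed with the affine map $\mathbf{x}\mapsto D-\sum_i x_i$ is convex by Assumption~\ref{as:LSEconvex}, so their sum is convex; hence $g$ is trivially jointly convex in $(\mathbf{x},\kappa)$. The feasible set is cut out by the two inequalities $D - \sum_i x_i - \kappa \le 0$ and $-D + \sum_i x_i - \kappa \le 0$, both affine in $(\mathbf{x},\kappa)$, so it is a convex (polyhedral) set. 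By the standard partial-minimization result — minimizing a jointly convex function over a convex set in a subset of the variables produces a convex function of the remaining variables — it follows that $V(\kappa)$ is convex, with convex effective domain. Any additional affine or box constraints on $\mathbf{x}$ would only intersect the feasible set with further convex sets and would not affect this conclusion.

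Finally I would assemble the pieces. Choosing $\sum_i x_i = D$ gives $\Delta = 0$, which is feasible for every $\kappa \ge 0$, so $V(\kappa)$ is finite on the natural domain $\kappa \ge 0$. Since expectation is an integral of convex functions, $\mathbb{E}_{\boldsymbol{\delta},\delta_r}[V(\kappa)]$ remains convex in $\kappa$; adding $C_{\text{cap}}(\kappa)$, convex by Assumption~\ref{as:kappaconvex}, keeps the total objective convex, and the only outer restriction $\kappa \ge 0$ is itself a convex constraint. This establishes that \eqref{opt:OFFLINE1} is a convex optimization problem over $\kappa$.

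The main obstacle is the middle step, and specifically the subtlety that $\kappa$ enters only through the constraints and not the objective. One therefore cannot argue convexity directly from the objective; instead the argument must pass through the feasible set. Writing the mismatch bounds as affine inequalities in the joint variable $(\mathbf{x},\kappa)$ makes the joint convexity of the constraint region transparent, which is precisely what licenses the partial-minimization lemma and delivers convexity of $V$ — and hence of the whole objective — cleanly.
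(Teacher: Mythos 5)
Your proof is correct, and it uses the same overall decomposition as the paper: both isolate the per-realization value function (your $V(\kappa)$ is exactly the paper's $R(\kappa;t)$ defined by \eqref{opt:OFFLINE1_REALTIME}), prove it is convex in $\kappa$, and then assemble the theorem from convexity of $C_{\text{cap}}$ (Assumption~\ref{as:kappaconvex}) and the fact that expectation preserves convexity. Where you genuinely differ is in how the key lemma is justified. The paper (Lemma~\ref{lm:convexKAPPA}, building on Lemma~\ref{thm:convexity2}) observes that $\kappa$ enters \eqref{opt:OFFLINE1_REALTIME} only as a right-hand-side perturbation of the constraints \eqref{const:Capacity_RT} and cites standard sensitivity analysis (Section 5.6.1 of Boyd and Vandenberghe): the optimal value of a convex program is convex in its constraint perturbations. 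You instead treat $(\mathbf{x},\kappa)$ as a joint variable, note that the objective is (trivially) jointly convex and the two capacity constraints are affine in $(\mathbf{x},\kappa)$, and invoke the partial-minimization lemma. The two routes are close cousins --- the perturbation result is itself typically proved by partial minimization --- but yours is more elementary and self-contained, requiring no duality apparatus. What the paper's route buys in exchange is the second half of Lemma~\ref{lm:convexKAPPA}: the identification of the negative of the sum of dual variables $\underline{\theta}+\overline{\theta}$ of \eqref{const:Capacity_RT} as a subgradient of $R(\kappa;t)$ with respect to $\kappa$, which the paper uses later in the appendix to derive the capacity optimality condition \eqref{eq:POSToptKappa}; your argument proves the theorem as stated but does not deliver that dual characterization. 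A minor additional merit of your write-up is that you verify $V(\kappa)$ is finite on $\kappa\ge 0$ by exhibiting the feasible point $\sum_i x_i = D$, a properness point the paper leaves implicit.
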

\begin{proof}
	The result follows from Assumption \ref{as:kappaconvex} and Lemma \ref{lm:convexKAPPA} given below.
\end{proof}
The proof requires the following lemmas and we restate the real-time decision (i.e. inside the expectation) of Problem \eqref{opt:OFFLINE1} as:
%\zhenhua{Josh, please update the following proof for the convexity of \eqref{opt:OFFLINE1}.}
\begin{subequations}\label{opt:OFFLINE1_REALTIME}
	\begin{align}
		R(\kappa;t):=\min_{\mathbf{x}(t)}\text{ } & \left\{\sum_{i}C_i(x_i(t);t)+C_\text{g}\left(D(t)-\sum_{i}x_i(t)\right)\right\} \\
		\text{s.t. } & -\kappa\leq D(t)-\sum_{i}x_i(t)\leq\kappa. \label{const:Capacity_RT}
	\end{align}
\end{subequations}

\begin{lemma}\label{thm:convexity2}
	Problem \eqref{opt:OFFLINE1_REALTIME} is a convex optimization problem.
	%, \eqref{opt:GENall2D}, \eqref{opt:LSEalg2}, and \eqref{opt:CUSTalg2} are convex optimization problems.
\end{lemma}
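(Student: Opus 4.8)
The plan is to verify directly that Problem~\eqref{opt:OFFLINE1_REALTIME} minimizes a convex objective over a convex feasible region. The key observation to set up first is that in this inner subproblem $\kappa$ and the realization index $t$ are held fixed, so the only decision variable is the vector $\mathbf{x}(t)=(x_1(t),\dots,x_N(t))$ and the mismatch $D(t)$ is a fixed constant.

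First I would establish convexity of the objective. The term $\sum_i C_i(x_i(t);t)$ is a separable sum in which each summand depends on a single coordinate and is convex by Assumption~\ref{as:CUSTconvex}; since a sum of convex functions is convex, this term is jointly convex in $\mathbf{x}(t)$. For the penalty term, I would write the argument $D(t)-\sum_i x_i(t)$ as an affine function of $\mathbf{x}(t)$ (affine because $D(t)$ is constant and $\sum_i x_i(t)$ is linear), and then invoke the standard composition rule: $C_g$ is convex by Assumption~\ref{as:LSEconvex}, and the composition of a convex function with an affine map is convex. Adding the two convex terms yields a convex objective.

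Next I would check the feasible set. The constraint~\eqref{const:Capacity_RT} splits into $D(t)-\sum_i x_i(t)\le\kappa$ and $D(t)-\sum_i x_i(t)\ge-\kappa$, each of which is an affine inequality in $\mathbf{x}(t)$ once $\kappa$ and $D(t)$ are fixed. Their intersection is a (possibly unbounded) polyhedron, hence convex. Minimizing a convex objective over a convex feasible set is by definition a convex optimization problem, which completes the argument.

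The only point requiring care — rather than a genuine obstacle — is keeping straight which quantities are decision variables versus fixed data: because the minimization sits inside the expectation in~\eqref{opt:OFFLINE1}, both the realization of $D(t)$ and the chosen capacity $\kappa$ are constants here, so that the penalty argument is affine in $\mathbf{x}(t)$ and the composition rule applies cleanly. Beyond invoking the three standard facts (a sum of convex functions is convex, a convex function composed with an affine map is convex, and affine inequalities define a convex set), I do not anticipate any difficulty.
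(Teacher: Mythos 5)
Your proof is correct and follows the same route as the paper's: convexity of the objective from Assumptions~\ref{as:CUSTconvex} and \ref{as:LSEconvex} together with the affine composition rule for the argument of $C_g(\cdot)$, plus convexity of the feasible set defined by the affine constraints~\eqref{const:Capacity_RT}. The paper states this in one sentence; you simply spell out the same standard facts in more detail, correctly noting that $\kappa$ and $D(t)$ are fixed data in this inner subproblem.
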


\begin{proof}
	This comes straightforward from Assumptions \ref{as:CUSTconvex} and \ref{as:LSEconvex}, that the function inside $C_{g}(\cdot)$ is linear, and the constraints form a convex set.
\end{proof}
%\begin{proof}
%	This comes straightforward from Assumptions \ref{as:CUSTconvex} and \ref{as:LSEconvex}, and that the expectation operator preserves convexity.  The result follows from that functions inside $C_i(\cdot)$, $\hat{C}_i(\cdot)$, $C_\text{cap}(\cdot)$, and $C_\text{g}(\cdot)$ are linear in terms of $(\mathbf{u},\mathbf{v},\mathbf{w},\boldsymbol{\alpha},\boldsymbol{\beta},\boldsymbol{\gamma})$ and the constraints form a convex set.
%\end{proof}

\begin{lemma}\label{lm:convexKAPPA}
	$R(\kappa;t)$ as defined by \eqref{opt:OFFLINE1_REALTIME} is a convex function of $\kappa$. Additionally the negative of the sum of dual variables $\underline{\theta}+\overline{\theta}$ from constraint \eqref{const:Capacity_RT} %that results from $(\kappa;t)$ 
	is the subgradient of $R(\kappa;t)$ w.r.t. $\kappa$.
\end{lemma}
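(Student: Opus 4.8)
The plan is to recognize $R(\kappa;t)$ as the optimal-value (perturbation) function of the convex program \eqref{opt:OFFLINE1_REALTIME}, in which the parameter $\kappa$ enters \emph{only} through the right-hand side of the capacity constraint \eqref{const:Capacity_RT}. For such value functions, both claims follow from standard parametric/duality arguments, so I would split the work into an elementary part establishing convexity and a Lagrangian-duality part extracting the subgradient. Throughout I rely on Lemma~\ref{thm:convexity2}, which already gives that the objective is convex and the feasible set convex for each fixed $\kappa$.

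For convexity I would argue directly, without invoking duality. Fix $\kappa_1,\kappa_2$ and $\lambda\in[0,1]$, write $\kappa_\lambda=\lambda\kappa_1+(1-\lambda)\kappa_2$, and let $\mathbf{x}^1,\mathbf{x}^2$ be minimizers attaining $R(\kappa_1;t)$ and $R(\kappa_2;t)$. The key observation is that the feasible set grows linearly in $\kappa$: since $D-\sum_i x_i$ is affine in $\mathbf{x}$, the convex combination $\mathbf{x}^\lambda=\lambda\mathbf{x}^1+(1-\lambda)\mathbf{x}^2$ satisfies $D-\sum_i x^\lambda_i=\lambda(D-\sum_i x^1_i)+(1-\lambda)(D-\sum_i x^2_i)$, which lies in $[-\kappa_\lambda,\kappa_\lambda]$ whenever the two summands lie in $[-\kappa_1,\kappa_1]$ and $[-\kappa_2,\kappa_2]$ respectively. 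Hence $\mathbf{x}^\lambda$ is feasible for $\kappa_\lambda$, and convexity of the objective gives $R(\kappa_\lambda;t)\le \lambda R(\kappa_1;t)+(1-\lambda)R(\kappa_2;t)$.

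For the subgradient I would pass to the Lagrangian dual. Introducing multipliers $\overline{\theta},\underline{\theta}\ge 0$ for the upper and lower parts of \eqref{const:Capacity_RT}, the parameter $\kappa$ appears in the Lagrangian only through the term $-(\overline{\theta}+\underline{\theta})\kappa$, so the dual function splits as $q(\overline{\theta},\underline{\theta};\kappa)=\tilde q(\overline{\theta},\underline{\theta})-(\overline{\theta}+\underline{\theta})\kappa$ with $\tilde q$ independent of $\kappa$. By strong duality one gets $R(\kappa;t)=\max_{\overline{\theta},\underline{\theta}\ge 0}[\tilde q(\overline{\theta},\underline{\theta})-(\overline{\theta}+\underline{\theta})\kappa]$, a pointwise supremum of functions that are affine in $\kappa$; this re-proves convexity and, crucially, shows that if $(\overline{\theta}^*,\underline{\theta}^*)$ is dual-optimal at some $\kappa_0$, then evaluating the affine function it indexes at an arbitrary $\kappa$ yields the subgradient inequality $R(\kappa;t)\ge R(\kappa_0;t)-(\overline{\theta}^*+\underline{\theta}^*)(\kappa-\kappa_0)$. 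Thus $-(\overline{\theta}^*+\underline{\theta}^*)$ is a subgradient of $R(\cdot;t)$ at $\kappa_0$, exactly the claim; the nonpositive sign is consistent with $R$ being nonincreasing in the capacity $\kappa$.

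The main obstacle is making the duality step rigorous: I must verify a constraint qualification so that strong duality holds \emph{and} the dual optimum is attained, so that the multipliers $\overline{\theta},\underline{\theta}$ named in the statement actually exist and the subgradient equals $-(\overline{\theta}+\underline{\theta})$ rather than only being bounded. Slater's condition handles every $\kappa>0$, since taking $\mathbf{x}$ with $\sum_i x_i=D$ makes $D-\sum_i x_i=0$ strictly interior to $[-\kappa,\kappa]$ ($\mathbf{x}$ being unconstrained in \eqref{opt:OFFLINE1_REALTIME}). The only delicate value is $\kappa=0$, where the feasible region collapses to the hyperplane $\sum_i x_i=D$; there I would invoke the affine-constraint form of the qualification and note it is the sole value needing separate care.
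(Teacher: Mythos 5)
Your proposal is correct and takes essentially the same route as the paper: the paper's proof is a one-line appeal to standard perturbation/sensitivity analysis of convex programs (Section 5.6.1 of \cite{boydconvex}), which is precisely the argument you unfold --- convexity of the optimal-value function in the perturbation parameter $\kappa$, plus the strong-duality inequality showing that $-(\underline{\theta}^*+\overline{\theta}^*)$ is a subgradient. Your write-up additionally verifies the constraint qualification (Slater for $\kappa>0$ and the affine refinement at $\kappa=0$), a detail the paper leaves implicit in its textbook citation.
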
%\zhenhua{This lemma seems not very relevant. Do we really need this?}
\begin{proof}
	Since \eqref{opt:OFFLINE1_REALTIME} is a convex optimization problem (from Lemma \ref{thm:convexity2}), the result follows from the fact that Problem \eqref{opt:OFFLINE1_REALTIME} is in a perturbed form in terms of $\kappa$ which can be used to follow standard sensitivity analysis (See Section 5.6.1 in \cite{boydconvex}).
\end{proof}

%%%%%%%%%%%%%%%%%%%%
%\subsection{Karush-Kuhn-Tucker conditions} 
%(Moved to Appendix)

%%%%%%%%%%%%%%%%%%%%
\subsection{Case study: quadratic cost functions}
\label{sec:char_quad}

To provide more optimal structures, we restrict ourselves to the quadratic cost functions, i.e. $C_i(x)=a_ix^2_i$ and $C_g(\mathbf{x})=A(D-\sum_i x_i)^2$.
Their key advantageous property is that have linear marginal costs which allow for more concrete analysis and motivate the linear decision policy (Section \ref{sec:alg_lin}).
Note this may seem restrictive, but this form is standard within the electricity markets literature, e.g., \cite{allaz1993cournot, yao2007two, murphy2010impact, cai2013inefficiency, liu2014pricing} and generation cost modeling \cite{park1993economic,gaing2003particle,hug2012generation}.

%%%%%%%%IN SIGMETRICS SUBMISSION:
%To provide more optimal structures, we need to restrict ourselves to the quadratic cost functions, i.e., $C_i(x)=a_ix^2_i$ and $C_g(\mathbf{x})=A(D-\sum_i x_i)^2$.
%Note this may seem restrictive, but this form is standard within the electricity markets literature, e.g., \cite{allaz1993cournot, yao2007two, murphy2010impact, cai2013inefficiency, liu2014pricing}.

Then the offline optimization problem in real-time without capacity constraints\footnote{This can be added with additional presentation complexity.} becomes: 
\vspace{-0.05in}
\begin{align}
	\min_{\mathbf{x}}\left\{\sum_{i}a_ix_i^2+A\left(D-\sum_{i}x_i\right)^2\right\}\nonumber
\end{align}

By the first-order stationary conditions from the Karush-Kuhn-Tucker (KKT) conditions (See Appendix \ref{sec:KKT} for the general case), we have
%%%%%%%IN SIGMETRICS SUBMISSION:
%By the KKT conditions (or simply the first-order conditions), we have
\vspace{-0.05in}
\begin{align}
	a_ix_i=A(D-\sum_jx_j)\text{, so } x_i=\frac{A}{a_i}(D-\sum_jx_j)\nonumber
\end{align}

Sum over all $i$ gives:
\begin{align}
	\sum_ix_i=\left(\sum_i\frac{A}{a_i}\right)(D-\sum_ix_i)\text{, so } \sum_ix_i=\frac{\sum_i\frac{A}{a_i}}{1+\sum_i\frac{A}{a_i}}D\nonumber
\end{align}

Finally, we have $x^*_i=\frac{A}{\left(1+\sum_j\frac{A}{a_j}\right)a_i}D$, which is linear to the social mismatch $D$. This property is crucial to help us design the linear policy.
This approach is general and can be applied to more complicated cost functions. 
The difference is that the optimal structure would be more complicated, and so would the policy.

\section{Algorithms}
\label{sec:alg}

%\zhenhua{We may need some figures to illustrate the ideas of proposed algorithms.}
%
%\zhenhua{We need to argue it's homogeneous in time so we don't need $t$}

In this section, we design two algorithms/control policies for DR, namely, ``PRED'' and ``LIN''.
PRED is the prediction based policy with no contracts, and is hence a voluntary program.
LIN relies on a linear demand response contract between LSE and customers, and is hence a mandatory program: it, however, respects customer uncertainties as will be shown later in this section. 
For presentation simplicity, we omit the $t$ and will highlight it wherever needed.

%\yue{need to speak the key difference of LIN from the existing mandatory algorithms --- addressing uncertainty somehow?} 
\vspace{-0.05in}
\subsection{Prediction Based Policy}
Ideally, the LSE would like to set a price for demand response and obtain an amount that would optimally solve Problem \eqref{opt:OFFLINE1_REALTIME}.  This in turn would allow Problem \eqref{opt:OFFLINE1} of finding the optimal capacity to be solved.

In general however, the customer cost functions $C_i(x_i(t);t)$ and the instantaneous customer load deviations $\delta_i(t)$ are not known by the LSE.
\fixes{Despite this, from historical data of past customer responses to prices, the LSE can estimate their cost functions denoted by $\hat{C}_i(x_i(t);t)$ with some inaccuracies.}
\removed{Despite this, from historical data of past customer responses to prices, the LSE may be able to estimate their cost functions by $\hat{C}_i(x_i(t);t)$.
Load data can also be used to form probability density functions for each customer's load deviation.}

\fixes{From a privacy perspective, customers may not be willing to share their detailed load data due to the information that can be inferred from it.
However, customers can hide some of their private information by recently proposed mechanism, e.g., using batteries to add randomness to their consumptions~\cite{zhao2014achieving,laforet2016towards}.}

First, the LSE utilizes the historical data to solve a (deterministic) optimization problem about how to set the price for demand response, i.e., $p(D;\kappa)$ for a given $\kappa$.  For a given $\kappa$ %$(\kappa;)$ 
it optimizes:
\begin{align}
	H(\kappa;D):=\min_{p}\text{ } & \left\{\sum_{i}\hat{C}_i(x_i(p))+C_\text{g}\left(D-\sum_{i}x_i(p)\right)\right\} \label{opt:PRED_KAPPA_D} \\
	\text{s.t. } & -\kappa\leq D-\sum_{i}x_i(p)\leq\kappa. \nonumber
	%\label{const:Capacity_RT}
\end{align}
We abuse the notation of $x_i(\cdot)$ as $x_i(p)=\arg\min\hat{C}_i(x_i)-px_i$ to represent a customer's demand response given a particular price $p$.  

To find the optimal amount of capacity $\kappa$, it must solve
\begin{align}
	\min_{\kappa}C_\text{cap}(\kappa)+\mathbb{E}_{\boldsymbol{\diffd},\delta_r}\left[H(\kappa;D)\right]\label{opt:PRED_KAPPA}
\end{align}
which may be done by exhaustive search for the best $\kappa$.  Once $\kappa$ is fixed, then the optimal solution to \eqref{opt:PRED_KAPPA_D} gives a price function $p(D;\kappa)$. % that depends on $D$.
This price function is then applied in real-time to obtain demand response from all the customers.
Under certain conditions, such as quadratic and/or linear cost functions, a closed form of $p(D;\kappa)$ can be obtained.

Formally, PRED works as follows:

\vspace{0.05in}
\noindent
\fbox{\begin{minipage}{26em}
\textbf{PRED:}
\begin{itemize}[leftmargin=0.25in]
\item Using historical date to solve \eqref{opt:PRED_KAPPA_D} to get $p(D; \kappa)$;
\item Solve \eqref{opt:PRED_KAPPA} to get $\kappa^{PRED}$ for capacity planning;
\item In real time, when observing $D$, set price $p(D; \kappa^{PRED})$.
\end{itemize}
\end{minipage}}
\vspace{0.05in}

%\todo{Joshu, please add details about how you get $p(D;\kappa)$}

PRED is intuitive and works well when the uncertainties on $C_i(\cdot)$ are small, as illustrated in the numerical evaluations in Section~\ref{sec:linear-evaluation}. 
This makes PRED an attractive solution in many cases.
For instance, in the emergency demand response program, the LSE sets the price based on their knowledge and experience, which corresponds to the predicted $p(D;\kappa)$ in our PRED scheme.
We would like to highlight that by proposing PRED, we are making two improvements over existing schemes.
First, we provide a rigorous procedure to get $p(D;\kappa)$ by looking into historical data and solving an optimization problem. 
Second, we jointly solve $\kappa$ and $p(D;\kappa)$, instead of getting them sequentially in SEQ.

However, its performance greatly depends on the estimation accuracy of the cost functions, and degrades as the uncertainties increase.
To see this, note that the LSE needs to pick $p(D;\kappa)$ based on some sort of ``averaged'' customers' cost functions, without knowing the exact realization of $C_{i}(\cdot;t)$.
When $C_{i}(\cdot;t)$ has more uncertainties, the response $x_i(p,C_i(\cdot;t))$ from customers based on the realized $C_{i}(\cdot;t)$ may be farther away from the value expected by the LSE.

\vspace{-0.05in}
\subsection{Linear policy}
\label{sec:alg_lin}
Motivated by the potential inefficiency of PRED when uncertainties are large, we design a linear policy, ``LIN'', as the contract between LSE and customers. %\yue{what's the reasoning behind "therefore"?}

In general, we can take the following nonlinear form of a customer demand response policy: % can be formulated around this general form:
\begin{align}
	x_i({\diffd}_i,D)=\alpha_if_i(D)+\beta_ig_i({\diffd}_i)+\gamma_i
\end{align}
%\zhenhua{It seems $\gamma_i$ should be 0?}
where $f_i(D)$ is a function of net aggregate mismatch $D$, %:=\sum_{i\in\mathcal{V}}{\diffd}_i-r$, 
and $g_i({\diffd}_i)$ is function of the local demand mismatch. This structure is rich enough for the demand change to be responsive to the net society mismatch through $\alpha_if_i(D)$, local self mismatch through $\beta_ig_i({\diffd}_i)$, and an independent reduction of $\gamma_i$.

While the above form is general and powerful, there are two concerns around it. First of all, in addition to the parameter $\alpha_i, \beta_i, \gamma_i$, we need to optimize over the function form $f_i(\cdot)$ and $g_i(\cdot)$, which is very challenging, if not impossible.
More importantly, customers may not be willing to accept contracts in a very complicated form.
Based on these considerations, we restrict our attentions to the following linear policy, and show that this policy works very well in many cases, in particular, when cost functions are quadratic.
How to decide the optimal $f_i(\cdot)$ and $g_i(\cdot)$ for general convex cost functions is our future work.

%\subsubsection*{Linear demand response policy based on global and local demand}
We now focus on a simple but powerful linear demand response policy that is a function of total and local net demands:
\begin{align}
	x_i({\diffd}_i)=\alpha_iD+\beta_i{\diffd}_i+\gamma_i. \label{pol:GENlinear}
\end{align}
%which makes 
%\todo{need to change ${\diffd}_i$ to the error, not absolute}

Intuitively, there are three components: $\alpha_iD$ implies each customer shares some (predefined) fraction of the global mismatch $D$; $\beta_i{\diffd}_i$ means  customer $i$ may need to take additional responsibility for the mismatch due to his own demand fluctuation and estimation error; finally, $\gamma_i$, the constant part, can help when the random variables $\mathbb{E}[D]$ and/or $\mathbb{E}[\delta_i]$ is nonzero.
\fixes{In fact, this linear policy is optimal for the case of quadratic cost functions which can be seen in Section \ref{sec:char_quad}.}

%Accordingly, constraint \eqref{const:CAPpolicy} becomes:
%
%\yue{The following can be made a lot simpler, also now that we have a lower bound in addition to upper bound, we need to update the following.}
%\begin{subequations}\label{const:KAPPAs}
%	\begin{equation}
%		\sum_{i\in\mathcal{V}}\kappa_i+\kappa_r\leq \kappa_\text{tot}
%	\end{equation}
%	\begin{equation}
%		\underline{\diffd}_i\left(1-\sum_{j\in\mathcal{V}}\alpha_j-\beta_i\right)-\gamma_i\leq \kappa_i,\quad \forall i\in\mathcal{V}
%	\end{equation}
%	\begin{equation}
%		\overline{\diffd}_i\left(1-\sum_{j\in\mathcal{V}}\alpha_j-\beta_i\right)-\gamma_i\leq \kappa_i,\quad \forall i\in\mathcal{V}
%	\end{equation}
%	\begin{equation}
%		\underline{r}\left(\sum_{j\in\mathcal{V}}\alpha_j-1\right)\leq \kappa_r
%	\end{equation}
%	\begin{equation}
%		\overline{r}\left(\sum_{j\in\mathcal{V}}\alpha_j-1\right)\leq \kappa_r
%	\end{equation}
%\end{subequations}

Then the LSE needs to solve the following optimization problem to obtain the optimal parameters for the linear contract, i.e., $\boldsymbol{\alpha}, \boldsymbol{\beta}, \boldsymbol{\gamma}$, as well as the optimal capacity $\kappa$:
\begin{align}
	\min_{\boldsymbol{\alpha},\boldsymbol{\beta},\boldsymbol{\gamma},\kappa}\text{ } &
	C_\text{cap}\left(\kappa\right)+\sum_{i}\mathbb{E}_{\delta_i, \delta_r, C_i}\left[C_i(\alpha_iD+\beta_i{\diffd}_i+\gamma_i)\right]\nonumber \\
	 & \quad +\mathbb{E}_{\boldsymbol{\diffd}, \delta_r}\left[C_\text{g}\left(D-\sum_{i}(\alpha_iD+\beta_i{\diffd}_i+\gamma_i)\right)\right] \label{opt:GENall2} \\
	\text{s.t. } & \eqref{const:CAPpolicy1}, \eqref{const:CAPpolicy2}\nonumber
\end{align}

%\todo{again change the above $d$ to error.}

The expectation can be separated by customer and LSE because of the linearity of expectation property.

%\todo{Add the theorem to say \eqref{opt:GENall2} is convex and the proof. }

\begin{theorem}
	Problem \eqref{opt:GENall2} is a convex optimization problem.
\end{theorem}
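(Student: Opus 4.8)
The plan is to verify directly that the objective of \eqref{opt:GENall2} is a convex function of the decision variables $(\boldsymbol{\alpha},\boldsymbol{\beta},\boldsymbol{\gamma},\kappa)$ and that its feasible set is convex. The single observation that drives everything is that, for \emph{each fixed realization} of the randomness $(\boldsymbol{\diffd},\delta_r,C_i(\cdot))$, the policy value $x_i=\alpha_iD+\beta_i\diffd_i+\gamma_i$ is an \emph{affine} function of the decision variables, since $D=\sum_i\diffd_i-\delta_r$ and each $\diffd_i$ are constants once the realization is fixed. This lets me reduce every term to the standard rule that a convex function precomposed with an affine map is convex.

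First I would treat the objective term by term. The capacity term $C_{\text{cap}}(\kappa)$ is convex in $\kappa$ by Assumption~\ref{as:kappaconvex}. For the customer term, $C_i(\cdot)$ is convex by Assumption~\ref{as:CUSTconvex}; composing it with the affine map $(\alpha_i,\beta_i,\gamma_i)\mapsto\alpha_iD+\beta_i\diffd_i+\gamma_i$ yields a convex function of $(\alpha_i,\beta_i,\gamma_i)$ for every realization, and taking expectation---a nonnegative integral of convex functions---preserves convexity. For the penalty term, the argument $D-\sum_i(\alpha_iD+\beta_i\diffd_i+\gamma_i)$ is affine in all the decision variables for each realization, $C_g(\cdot)$ is convex by Assumption~\ref{as:LSEconvex}, so the composition is convex and again the expectation preserves convexity. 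A finite sum of convex functions is convex, so the whole objective is convex.

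Next I would handle the two worst-case constraints. The key step is to rewrite \eqref{const:CAPpolicy1} as the statement that $D-\sum_i(\alpha_iD+\beta_i\diffd_i+\gamma_i)\le\kappa$ holds \emph{for every} realization $(\boldsymbol{\diffd},\delta_r)$, and likewise \eqref{const:CAPpolicy2} as $D-\sum_i(\alpha_iD+\beta_i\diffd_i+\gamma_i)\ge-\kappa$ for every realization. For each fixed realization both are affine inequalities in $(\boldsymbol{\alpha},\boldsymbol{\beta},\boldsymbol{\gamma},\kappa)$ and hence define halfspaces; the feasible set is the intersection over all realizations of these halfspaces, and an arbitrary intersection of convex sets is convex. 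Equivalently, the left-hand side $\max_{\boldsymbol{\diffd},\delta_r}\{D-\sum_ix_i\}$ is a pointwise maximum of affine functions of the decision variables, hence convex, so \eqref{const:CAPpolicy1} is a convex sublevel constraint; symmetrically the pointwise minimum in \eqref{const:CAPpolicy2} is concave, giving a convex constraint once $-\kappa$ is moved to the other side. Minimizing a convex objective over a convex feasible set is by definition a convex program, which completes the argument.

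I do not expect a genuine obstacle here; the content lies entirely in the affine dependence of the policy on the decision variables, which reduces every objective term to the convex-composed-with-affine rule and every constraint to an intersection of halfspaces. The only points requiring care are (i) stating convexity with respect to the \emph{decision variables} rather than the realizations, and (ii) justifying that the expectation preserves convexity, which is immediate since it is a nonnegative integral (or a limit of nonnegative combinations) of convex functions.
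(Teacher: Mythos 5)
Your proposal is correct and follows essentially the same route as the paper's own proof: convexity of the objective via the convex-composed-with-affine rule (the policy being affine in $(\boldsymbol{\alpha},\boldsymbol{\beta},\boldsymbol{\gamma})$ for each realization) together with preservation of convexity under expectation, and convexity of the feasible set via the pointwise maximum/minimum of affine functions in constraints \eqref{const:CAPpolicy1}--\eqref{const:CAPpolicy2}. Your halfspace-intersection phrasing and explicit treatment of the $C_{\text{cap}}(\kappa)$ term are just slightly more detailed renderings of the same argument.
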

\begin{proof}
	The objective function is convex in $(\boldsymbol{\alpha},\boldsymbol{\beta},\boldsymbol{\gamma})$ because the expectation operator preserves convexity, the cost functions are convex from Assumptions \ref{as:CUSTconvex}-\ref{as:kappaconvex}, and the arguments inside each cost function are linear in $(\boldsymbol{\alpha},\boldsymbol{\beta},\boldsymbol{\gamma})$ for each realization of $(\delta_i,\delta_r)$ which preserve convexity.
	By substituting the linear policy \eqref{pol:GENlinear} into constraints \eqref{const:CAPpolicy1}, \eqref{const:CAPpolicy2} it forms a convex set because the point-wise maximum (minimum) operator preserves convexity (concavity), and the arguments inside the operators are linear in $(\boldsymbol{\alpha},\boldsymbol{\beta},\boldsymbol{\gamma})$ for each realization of $(\delta_i,\delta_r)$ which preserve convexity (concavity).  The result follows from the objective function being convex and the constraint set forming a convex set.
\end{proof}

%\zhenhua{Josh, please add descriptions about your centralized algorithm.}

Since Problem \eqref{opt:GENall2} is a convex optimization problem and is also stochastic, it can be solved centrally with standard stochastic optimization techniques such as the Stochastic Subgradient Method with Monte Carlo sampling  where one simulates a sample of $(\delta_i,\delta_r)$ and solves the problem via the dual variables by taking a step in the direction of the dual variables' subgradient . As the number of iterations approaches infinity, then the dual variables approach their optimal values \cite{boyd2014stochsubgradient}.  If the cost functions are quadratic and/or linear then only the first and second moments of $\delta_i$ and $\delta_r$ are needed to make \eqref{opt:GENall2} into a deterministic convex optimization problem.

%%%%%%%%%%%
%\vspace{1in} Here is where slack is so that if we need to add parts to the title, we can
%%%%%%%%%%%

Formally, LIN works as:

\vspace{0.05in}
\noindent
\fbox{\begin{minipage}{26em}
\textbf{LIN:}
\begin{itemize}[leftmargin=0.25in]
\item Solve \eqref{opt:GENall2} to get $\kappa^{LIN}$ for capacity planning and the parameters $(\boldsymbol{\alpha}^*,\boldsymbol{\beta}^*,\boldsymbol{\gamma}^*)$;
\item In real time, customer $i$ needs to provide demand response $x_i(t)=\alpha_i^*D(t)+\beta_i^*{\diffd}_i(t)+\gamma_i^*$ when observing $D(t)$.
\end{itemize}
\end{minipage}}
\vspace{0.05in}

%%%%%%%%%%%%%%%%%%%%
\subsection{Distributed algorithm design}

\revised{In many cases, the LSE may not know the information about customers' cost functions, and cannot solve \eqref{opt:GENall2} for the parameters in the linear contract.
To handle this, we design the distributed algorithm to decompose the problem, so that each customer solves an optimization problem based on her own cost function, and LSE solves another optimization problem without knowing the customers' cost function.
The goal is to achieve the optimal $(\kappa^{LIN}, \boldsymbol{\alpha}^*,\boldsymbol{\beta}^*,\boldsymbol{\gamma}^*)$.}
\removed{In many cases, the LSE may not know the information about customers' cost functions, and cannot solve \eqref{opt:GENall2} for capacity planning and the parameters in the linear contract.
To handle this, we design the distributed algorithm to decompose the problem, so that each customer is solving an optimization problem based on her own cost function, and LSE is solving another optimization problem without knowing the customers' cost function.
The goal is to achieve the optimal $(\kappa^{LIN}, \boldsymbol{\alpha}^*,\boldsymbol{\beta}^*,\boldsymbol{\gamma}^*)$ that can be used in LIN.}

\revised{To decompose Problem \eqref{opt:GENall2} we use the auxiliary variables $(u_i,v_i,w_i)$ for $(\alpha_i,\beta_i,\gamma_i)$ in the estimated cost function $\hat{C}_i(\cdot)$ of each customer, and add the constraints that $u_i=\alpha_i$, $v_i=\beta_i$, and $w_i=\gamma_i$:}
\removed{To decompose Problem \eqref{opt:GENall2} we introduce and substitute $(u_i,v_i,w_i)$ for $(\alpha_i,\beta_i,\gamma_i)$ in each of the customer's estimated cost function $\hat{C}_i(\cdot)$, and add the constraints that $u_i=\alpha_i$, $v_i=\beta_i$, and $w_i=\gamma_i$.
The equivalent problem of \eqref{opt:GENall2} becomes:}
\begin{subequations}\label{opt:GENall2D}
	\begin{align}
		\min_{\boldsymbol{\alpha},\boldsymbol{\beta},\boldsymbol{\gamma},\mathbf{u},\mathbf{v},\mathbf{w},\kappa}\text{ } &
		C_\text{cap}\left(\kappa\right)+\sum_{i}\mathbb{E}_{\boldsymbol{\diffd},\delta_r}\left[\hat{C}_i(u_iD+v_i{\diffd}_i+w_i)\right]\nonumber \\
		& +\mathbb{E}_{\boldsymbol{\diffd},\delta_r}\left[C_\text{g}\left(\sum_{i}({\diffd}_i-\alpha_iD-\beta_i{\diffd}_i-\gamma_i)-\delta_r\right)\right] \nonumber\\
		\text{s.t. } & \eqref{const:CAPpolicy1}, \eqref{const:CAPpolicy2} \\
		 & u_i=\alpha_i,\quad v_i=\beta_i,\quad w_i=\gamma_i,\quad \forall i\in\mathcal{V} \label{const:SEPARATE}
	\end{align}
\end{subequations}

Problem \eqref{opt:GENall2D} can be split where each customer controls its own $(u_i,v_i,w_i)$ and the LSE controls $(\boldsymbol{\alpha},\boldsymbol{\beta},\boldsymbol{\gamma})$
from the dual decomposition of constraint \eqref{const:SEPARATE}.
\revised{Let $(\pi_i,\lambda_i,\mu_i)$ be the dual prices for each customer corresponding to constraint \eqref{const:SEPARATE} which result in the following meanings: $\mu_i$ is the price paid for the guaranteed reduction in power consumption $w_i$ by the customer, $\pi_i$ is the price paid for absorbing the fraction $u_i$ of the net aggregate system demand $D$, $\lambda_i$ is the price paid for absorbing the fraction $v_i$ of its own local net demand ${\diffd}_i$.
Therefore $\pi_iu_i+\lambda_iv_i+\mu_iw_i$ is the total payment to customer $i$ for following the linear demand response policy. }
\removed{Let $(\pi_i,\lambda_i,\mu_i)$ be the dual prices for each customer corresponding to constraint \eqref{const:SEPARATE}.
The dual price $\mu_i$ is the price paid for the guaranteed reduction in power consumption $w_i$ by the customer.
The dual price $\pi_i$ is the price paid for absorbing the fraction $u_i$ of the net aggregate system demand $D$.
The dual price $\lambda_i$ is the price paid for absorbing the fraction $v_i$ of its own local net demand ${\diffd}_i$.
Therefore $\pi_iu_i+\lambda_iv_i+\mu_iw_i$ is the total payment to customer $i$ for following the linear demand response policy. }
%a constant reduction and response to the overall state of the system.

Accordingly, \eqref{opt:GENall2} is decomposed as the individual customer optimization problem,
\begin{align}
	\min_{u_i,v_i,w_i} \mathbb{E}_{\boldsymbol{\diffd},\diffd_r}\left[\hat{C}_i(u_iD+v_i{\diffd}_i+w_i)\right]-\pi_iu_i-\lambda_iv_i-\mu_iw_i \label{opt:CUSTalg2}
\end{align}
and the LSE's optimization problem,
\begin{align}
	\min_{\boldsymbol{\alpha},\boldsymbol{\beta},\boldsymbol{\gamma},\kappa}\text{ } & 	C_\text{cap}\left(\kappa\right)+\sum_{i}(\pi_i\alpha_i+\lambda_i\beta_i+\mu_i\gamma_i)\nonumber \\
	 & \quad  +\mathbb{E}_{\boldsymbol{\diffd},\diffd_r}\left[C_\text{g}\left(\sum_{i}({\diffd}_i-\alpha_iD-\beta_i{\diffd}_i-\gamma_i)-\diffd_r\right)\right] \label{opt:LSEalg2} \\
	 \text{s.t. } & \eqref{const:CAPpolicy1}, \eqref{const:CAPpolicy2}\nonumber
\end{align}
%\begin{equation}
%\min_{\boldsymbol{\alpha},\boldsymbol{\beta},\boldsymbol{\gamma},\kappa} 	C_\text{cap}\left(\kappa\right)+\sum_{i\in\mathcal{V}}(\pi_i\alpha_i+\lambda_i\beta_i+\mu_i\gamma_i)\nonumber
%\end{equation}
%\begin{equation}\label{opt:LSEalg2}
%+\mathbb{E}_{\mathbf{\diffd},r}\left[C_\text{g}\left(\sum_{i\in\mathcal{V}}({\diffd}_i-\alpha_iD-\beta_i{\diffd}_i-\gamma_i)-r\right)\right]
%\end{equation}
%\begin{equation}
%\text{s.t. }\eqref{const:CAPpolicy1}, \eqref{const:CAPpolicy2}\nonumber
%\end{equation}
\revised{which both can be solved with standard stochastic optimization techniques such as the Stochastic Subgradient Method with Monte Carlo sampling \cite{boyd2014stochsubgradient}. With linear or quadratic cost functions, they in fact can be solved as a deterministic convex optimization problem with only the first and second order moments of $\boldsymbol{\diffd}$ and $\delta_r$.
The goal now becomes to find the optimal dual prices to ensure the customers' and LSE's decisions satisfy \eqref{const:SEPARATE}.
We achieve this by using the Subgradient Method (see \cite{bertsekas1999nonlinear} Chapter 6):}
\removed{Problems \eqref{opt:CUSTalg2} and \eqref{opt:LSEalg2} can be solved with standard stochastic optimization techniques such as the Stochastic Subgradient Method with Monte Carlo sampling \cite{boyd2014stochsubgradient}. With linear or quadratic cost functions, they in fact can be solved as a deterministic convex optimization problem whose parameters are determined by first and second order moments of $\boldsymbol{\diffd}$ and $\delta_r$.
To solve the decomposed problems, we must ensure the customers' and LSE's decisions satisfy \eqref{const:SEPARATE}. We achieve this by using the Subgradient Method (see \cite{bertsekas1999nonlinear} Chapter 6) to obtain the optimal dual prices in the following}

\vspace{0.05in}
\noindent
\fbox{\begin{minipage}{26em}
\textbf{Distributed Algorithm for LIN}:
\begin{enumerate}[start=0]
	\item \textbf{Initialization:} $(\boldsymbol{\alpha},\boldsymbol{\beta},\boldsymbol{\gamma},\mathbf{u},\mathbf{v},\mathbf{w},\boldsymbol{\pi},\boldsymbol{\lambda},\boldsymbol{\mu}):=\mathbf{0}$.
	\item \textbf{LSE:} receives $(u_i,v_i,w_i)$ from each customer $i\in\mathcal{V}$.
	\begin{itemize}
		\item Solves Problem \eqref{opt:LSEalg2}	and updates $(\boldsymbol{\alpha},\boldsymbol{\beta},\boldsymbol{\gamma})$ with the optimal solution.
		\item Updates the stepsize:
		\vspace{-0.05in}
		\begin{equation}\label{step:STEPupdate}
		\eta=\frac{\zeta/k}{||(\boldsymbol{\alpha},\boldsymbol{\beta},\boldsymbol{\gamma})-(\mathbf{u},\mathbf{v},\mathbf{w})||_2}
		\end{equation}
		where $\zeta$ is a small constant and $k$ is the iteration number.
		\item Updates the dual prices, $\forall i\in\mathcal{V}$:
	\end{itemize}
	\begin{small}
	\begin{equation}\label{step:DUALupdate}
	(\pi_i,\lambda_i,\mu_i):=(\pi_i,\lambda_i,\mu_i)+\eta\left((\alpha_i,\beta_i,\gamma_i)-(u_i,v_i,w_i)\right)
	\end{equation}
	\end{small}
	\begin{itemize}
		\item Sends $(\pi_i,\lambda_i,\mu_i)$ to the each customer respectively.
	\end{itemize}
	\item \textbf{Customer $i\in\mathcal{V}$:} receives $(\pi_i,\lambda_i,\mu_i)$ from LSE.
	\begin{itemize}
		\item Solves Problem \eqref{opt:CUSTalg2} and updates $(u_i,v_i,w_i)$ with optimal solution.
		\item Sends $(u_i,v_i,w_i)$ to the LSE.
	\end{itemize}
	\item Repeat Steps 1-2 until $||(\boldsymbol{\alpha},\boldsymbol{\beta},\boldsymbol{\gamma})-(\mathbf{u},\mathbf{v},\mathbf{w})||_2\leq \epsilon$ where $\epsilon$ is the tolerance on magnitude of the subgradient.
\end{enumerate}
\end{minipage}}
\vspace{0.05in}

Since the step size $\eta$ is non-summable, square summable, and diminishes to zero, the algorithm will converge to an equilibrium point.
Then the final agreed values are $\boldsymbol{\alpha} := \mathbf{u}, \boldsymbol{\beta} := \mathbf{v}, \boldsymbol{\gamma} := \mathbf{w}$ last sent by the customers, and the dual prices are $(\boldsymbol{\pi},\boldsymbol{\lambda},\boldsymbol{\mu})$ last sent by the LSE.
\revised{When the LSE signals the customers for DR and they respond accordingly, they will each be paid $\pi_iu_i+\lambda_iv_i+\mu_iw_i$.}
\removed{When the LSE signals the customers for DR and they respond accordingly, they will each be paid $\pi_iu_i+\lambda_iv_i+\mu_iw_i$ by the LSE.}

\revised{We now prove the convergence and optimality using two mild technical conditions:}
\removed{We now establish the convergence and optimality for the proposed distributed algorithm.
Two mild technical conditions are the following:}
\vspace{-0.05in}
\begin{assumption}\label{as:DUALbound}
	The distance between the first iteration of the distributed algorithm and the optimal dual prices has a finite upper bound:  $||\Lambda^{(1)}-\Lambda^*||_2\leq\overline{\Lambda}<\infty$.
\end{assumption}
\vspace{-0.10in}
\begin{assumption}\label{as:SUBGRADbound}
	The magnitude of the dual subgradient $G$ has a finite upper bound: $||G||_2\leq\overline{G}<\infty$ (i.e. the Lagrangian dual function is Lipschitz continuous).
\end{assumption}
\vspace{-0.05in}
\begin{theorem}\label{th:convergence}
	\revised{Given Assumptions \ref{as:DUALbound} and \ref{as:SUBGRADbound}, the distributed algorithm's trajectory of dual prices converge to the optimal dual prices.}
	\removed{Given Assumptions \ref{as:DUALbound} and \ref{as:SUBGRADbound}, the distributed algorithm's best dual prices converge to the optimal dual prices.}
\end{theorem}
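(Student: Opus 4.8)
The plan is to recognize the distributed algorithm as the classical subgradient method applied to the Lagrangian dual of Problem~\eqref{opt:GENall2D}, and then invoke the standard non-summable, square-summable step-length analysis. First I would let $\Lambda=(\boldsymbol{\pi},\boldsymbol{\lambda},\boldsymbol{\mu})$ collect all dual prices, $P=(\boldsymbol{\alpha},\boldsymbol{\beta},\boldsymbol{\gamma})$ and $Q=(\mathbf{u},\mathbf{v},\mathbf{w})$ the LSE's and customers' primal responses, and write $q(\Lambda)$ for the dual function obtained by minimizing the separable Lagrangian, i.e. the sum of the optimal values of \eqref{opt:CUSTalg2} and \eqref{opt:LSEalg2}. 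Because Problem~\eqref{opt:GENall2} (hence \eqref{opt:GENall2D}) is convex with only the linear coupling constraints \eqref{const:SEPARATE}, strong duality holds, $q$ is concave, and the residual $G^{(k)}:=P^{(k)}-Q^{(k)}$ evaluated at the inner minimizers is exactly a supergradient of $q$ at $\Lambda^{(k)}$. The update \eqref{step:DUALupdate} is therefore dual ascent $\Lambda^{(k+1)}=\Lambda^{(k)}+\eta_k G^{(k)}$, and the normalization \eqref{step:STEPupdate} makes the actual displacement length $s_k:=\eta_k\|G^{(k)}\|_2=\zeta/k$, which is non-summable ($\sum_k s_k=\infty$), square-summable ($\sum_k s_k^2<\infty$), and vanishing.

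The core of the argument is the standard distance recursion. Expanding and using the supergradient inequality $\langle G^{(k)},\Lambda^{(k)}-\Lambda^*\rangle\le q(\Lambda^{(k)})-q(\Lambda^*)\le 0$ (where $\Lambda^*$ is any dual optimum) gives
\begin{equation}
\|\Lambda^{(k+1)}-\Lambda^*\|_2^2\le\|\Lambda^{(k)}-\Lambda^*\|_2^2-2\eta_k\big(q(\Lambda^*)-q(\Lambda^{(k)})\big)+s_k^2.\nonumber
\end{equation}
Telescoping from $k=1$ to $K$, dropping the nonnegative left-hand side, bounding the initial distance by Assumption~\ref{as:DUALbound} and using $\eta_k\ge s_k/\overline{G}$ from Assumption~\ref{as:SUBGRADbound}, I would divide by $2\sum_{k\le K}\eta_k$ to obtain
\begin{equation}
\min_{k\le K}\big(q(\Lambda^*)-q(\Lambda^{(k)})\big)\le\frac{\overline{G}\big(\overline{\Lambda}^2+\sum_{k\le K}s_k^2\big)}{2\sum_{k\le K}s_k}.\nonumber
\end{equation}
As $K\to\infty$ the numerator stays finite by square-summability while the denominator diverges by non-summability, so the best dual value converges to $q(\Lambda^*)$.

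To upgrade this to convergence of the \emph{trajectory} $\Lambda^{(k)}$ itself, I would drop the nonnegative middle term in the recursion to get the quasi-Fej\'er inequality $\|\Lambda^{(k+1)}-\Lambda^*\|_2^2\le\|\Lambda^{(k)}-\Lambda^*\|_2^2+s_k^2$; since $\sum_k s_k^2<\infty$, the sequence $\|\Lambda^{(k)}-\Lambda^*\|_2$ converges for every fixed optimum $\Lambda^*$. The liminf result from the previous paragraph furnishes a subsequence along which $q(\Lambda^{(k_j)})\to q(\Lambda^*)$; this subsequence is bounded, so it has a limit point $\bar\Lambda$, and continuity of the concave $q$ forces $\bar\Lambda$ to be dual-optimal. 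Taking $\Lambda^*=\bar\Lambda$ in the monotone-distance conclusion and noting the distance vanishes along the subsequence pins the limit to zero, giving $\Lambda^{(k)}\to\bar\Lambda$.

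I expect the main obstacle to be precisely this last upgrade from best-value/liminf convergence (which is textbook) to full trajectory convergence as claimed in the statement: it requires the quasi-Fej\'er monotonicity together with a subsequence-plus-continuity argument, and care is needed when the dual optimum is not unique, in which case the trajectory converges to \emph{some} point of the optimal set rather than a prescribed one. A secondary technical point worth verifying is that the inner minimizations in \eqref{opt:CUSTalg2} and \eqref{opt:LSEalg2} attain their optima so that $G^{(k)}$ is a genuine supergradient of $q$; this is guaranteed by the convexity established earlier together with coercivity of the cost functions under Assumptions~\ref{as:CUSTconvex}--\ref{as:kappaconvex}.
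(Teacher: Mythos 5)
Your core argument coincides with the paper's: both recognize the dual-price update \eqref{step:DUALupdate} as the subgradient method on the Lagrangian dual of \eqref{opt:GENall2D}, form the squared-distance recursion, apply the supergradient inequality, telescope from the first iteration, insert the normalized step length $\zeta/k$ from \eqref{step:STEPupdate}, and invoke Assumptions \ref{as:DUALbound} and \ref{as:SUBGRADbound} to conclude that the best dual value converges to the optimal dual value. Where you genuinely depart from the paper is the final upgrade to convergence of the iterates. The paper stops at value convergence and then asserts that, since the dual function $Q(\Lambda)$ is continuous, $\lim_{k\rightarrow\infty}\Lambda_\text{best}^{(k)}=\Lambda^{*}$; this inference does not follow from continuity alone (continuity gives that prices near the optimum have near-optimal value, not the converse), and would require uniqueness of the dual maximizer or an argument about the optimal set. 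Your quasi-Fej\'er step supplies exactly the missing piece: dropping the nonnegative middle term yields $\|\Lambda^{(k+1)}-\Lambda^*\|_2^2\le\|\Lambda^{(k)}-\Lambda^*\|_2^2+s_k^2$ with $\sum_k s_k^2<\infty$, so the distance to any fixed optimum converges; extracting a value-convergent subsequence, identifying a dual-optimal limit point $\bar\Lambda$ by continuity, and then applying the distance convergence with $\Lambda^*=\bar\Lambda$ pins the whole trajectory to $\bar\Lambda$. This is the standard, correct route to the trajectory convergence that the revised statement actually claims, and it patches the one loose step in the paper's own proof. Your caveat is also apt: when the dual optimum is not unique, the trajectory converges to \emph{some} element of the optimal set rather than a prescribed $\Lambda^*$, which is the right reading of the theorem.
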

The proof has been relegated to Appendix \ref{sec:app-1}.

\section{Performance Evaluation}
\label{sec:linear-evaluation}

\begin{figure}
	\includegraphics[width=0.85\columnwidth]{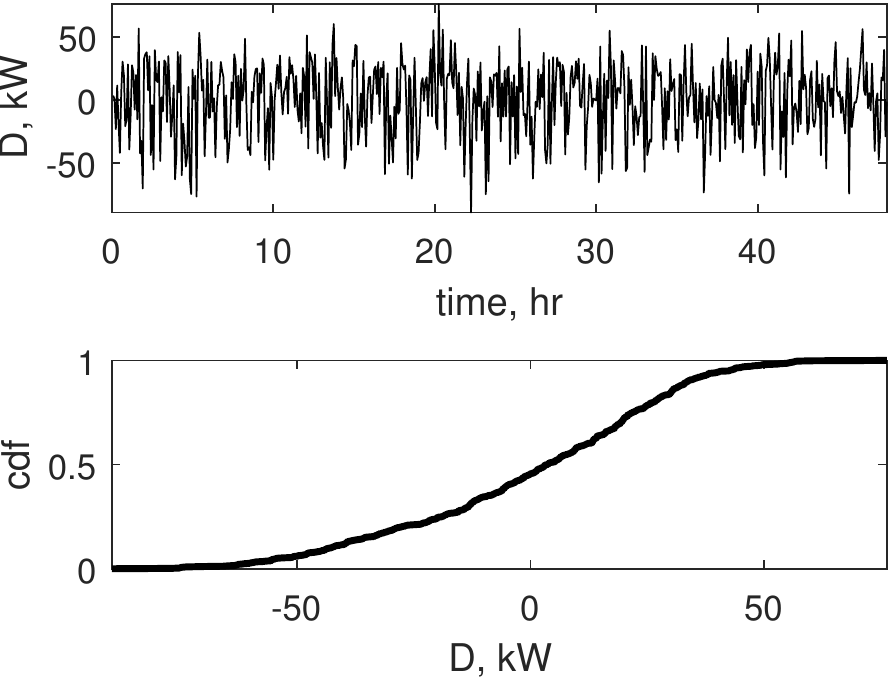}
	\vspace{-0.1in}
	\caption{(a) Demand mismatch for the 300 customers along with 100 kW of wind power capacity used in the simulations; (b) Cumulative distribution for the demand mismatch.}
	\label{f.D_trace}
	\vspace{-0.1in}
\end{figure}

\begin{figure*}[ht]
	\begin{center}
		\subfigure[Social Cost]{{\includegraphics[width=0.49\columnwidth]{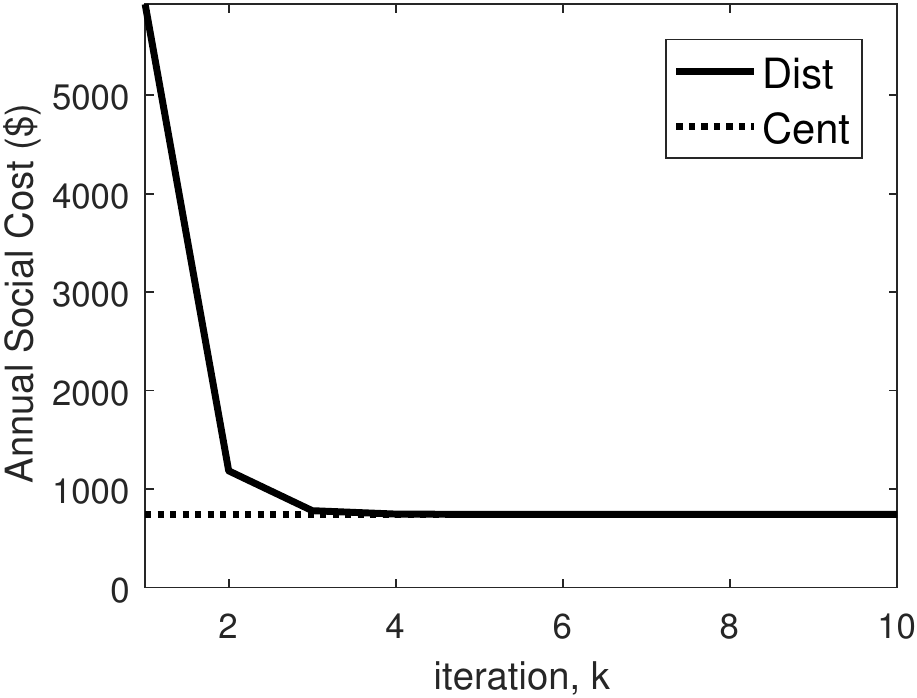}}
			\label{FIG:DISTALG_SOCCOST}}
		\subfigure[Total Reactive DR]{{\includegraphics[width=0.49\columnwidth]{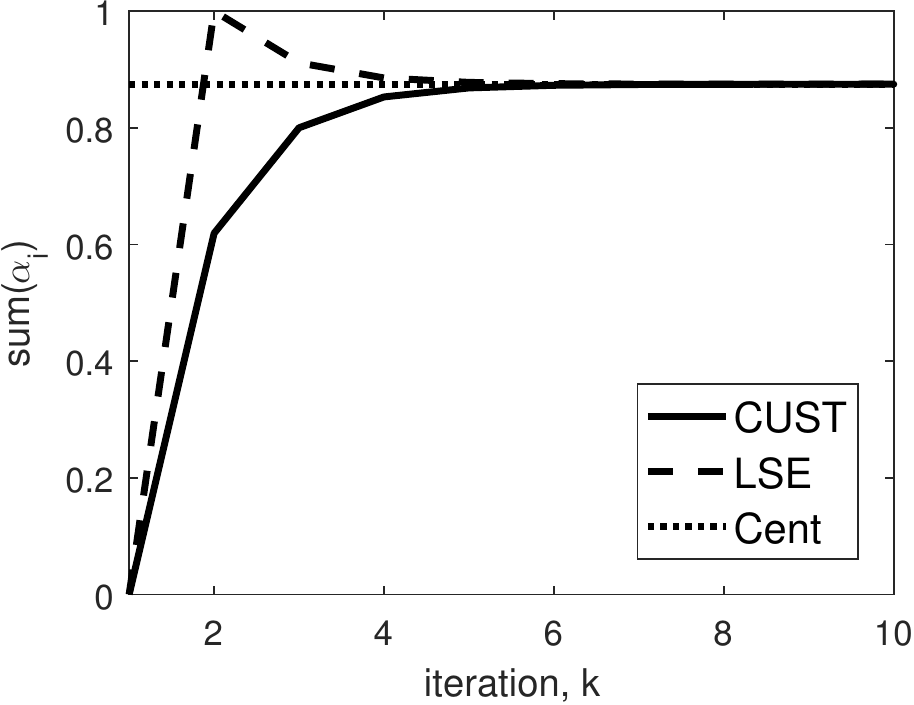}}
			\label{FIG:DISTALG_SUMALPHAS}}
		\subfigure[Total Constant DR]{{\includegraphics[width=0.49\columnwidth]{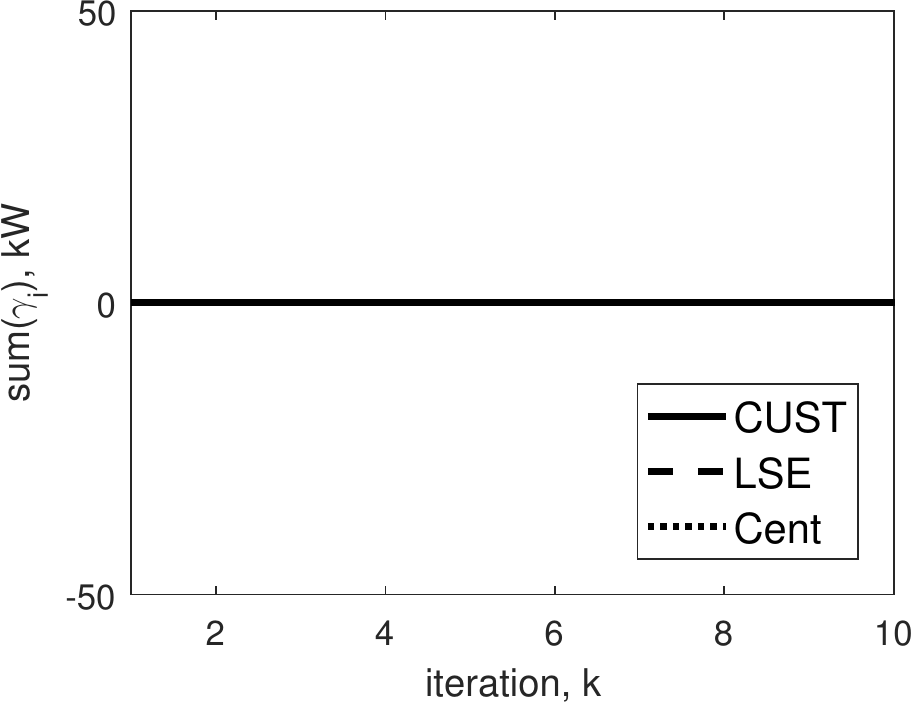}}
			\label{FIG:DISTALG_SUMGAMMAS}}
		\subfigure[Individual Customers]{{\includegraphics[width=0.49\columnwidth]{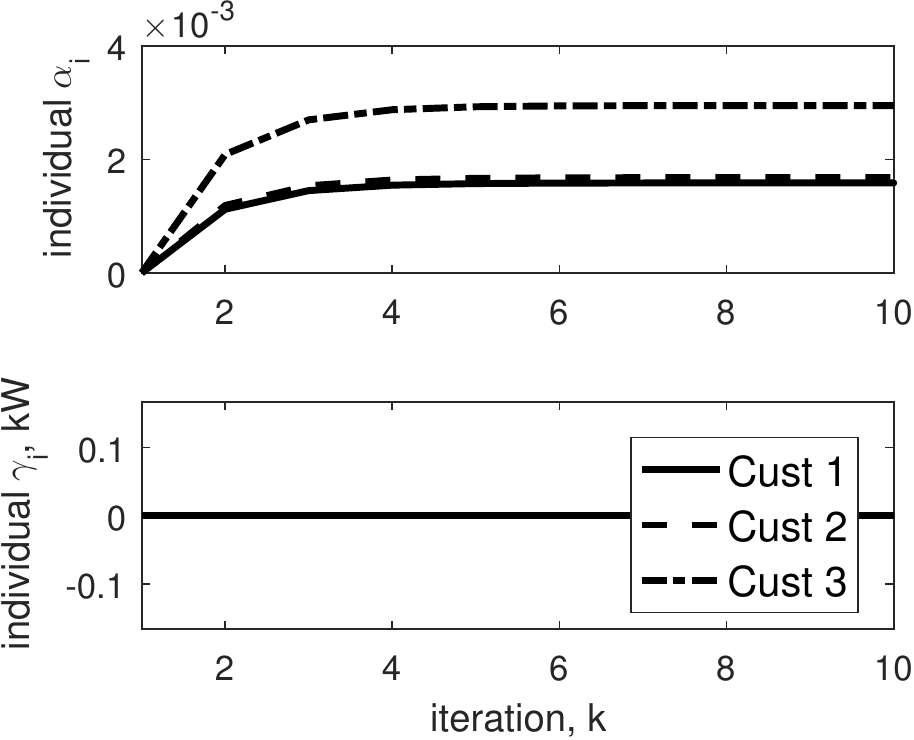}}
			\label{FIG:DISTALG_INDCUST}}
		\vspace{-0.15in}
		\caption{Convergence of the distributed algorithm.  The lines CUST and LSE in (b) represent the $\sum_iu_i$ and $\sum_i\alpha_i$ respectively as they converge to satisfy constraint \eqref{const:SEPARATE}.  Likewise for (c) w.r.t. $\sum_iw_i$ and $\sum_i\gamma_i$. (d) shows the convergence of three customers.}
		\label{f.DISTALG}
	\end{center}
	\vspace{-0.05in}
\end{figure*}

\begin{figure*}[!ht]
	\begin{center}
		\subfigure[Social Cost]{{\includegraphics[width=0.49\columnwidth]{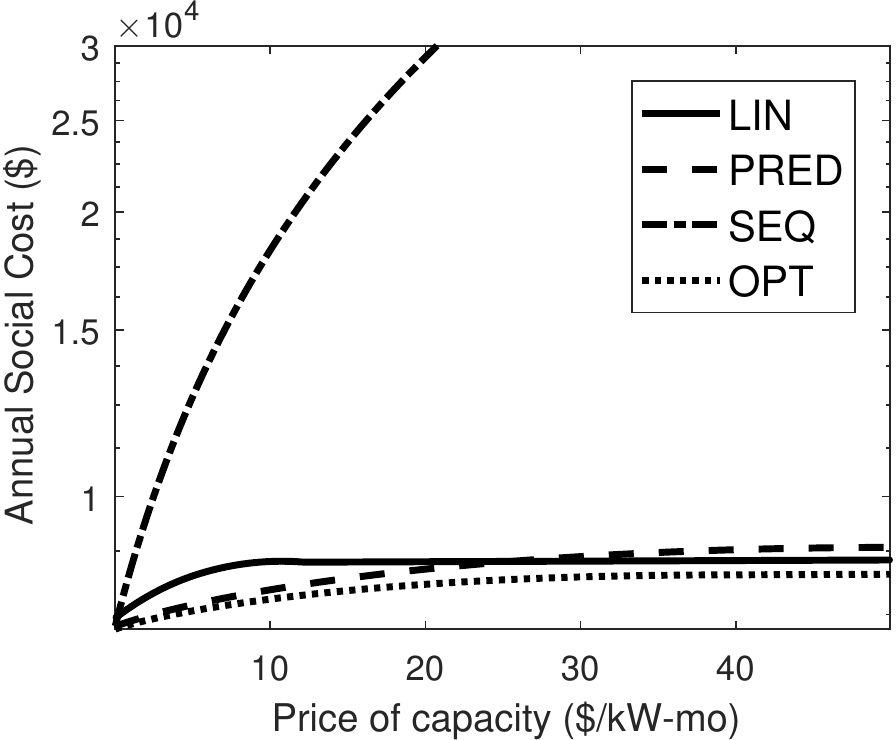}}
			\label{FIG:COMPSCH_SOC}}
				\subfigure[Optimal Capacity]{{\includegraphics[width=0.49\columnwidth]{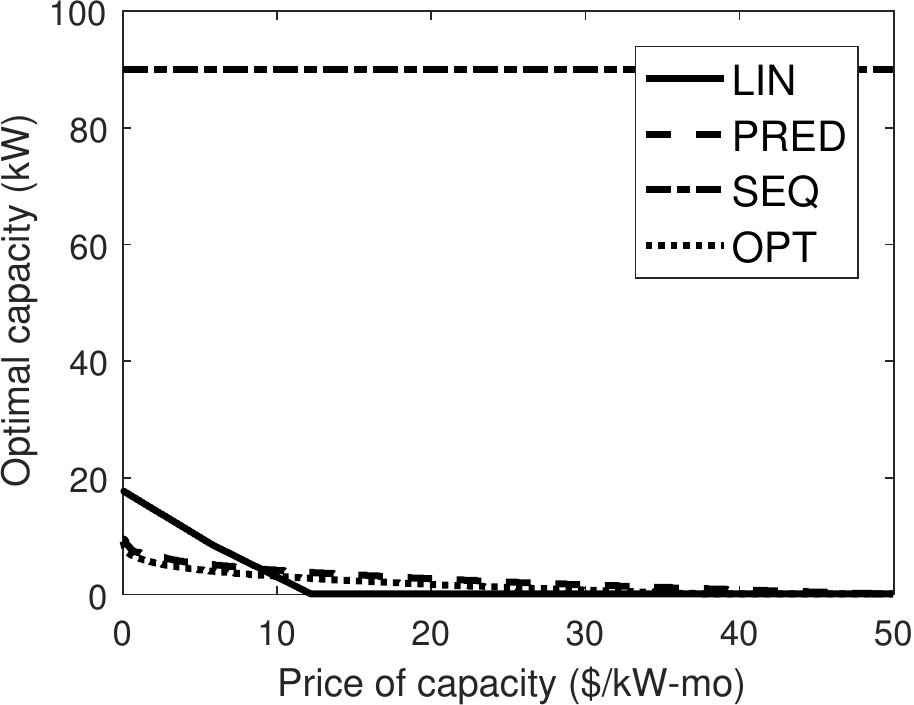}}
			\label{FIG:COMPSCH_OPTCAP}}
		\subfigure[Demand Response]{{\includegraphics[width=0.49\columnwidth]{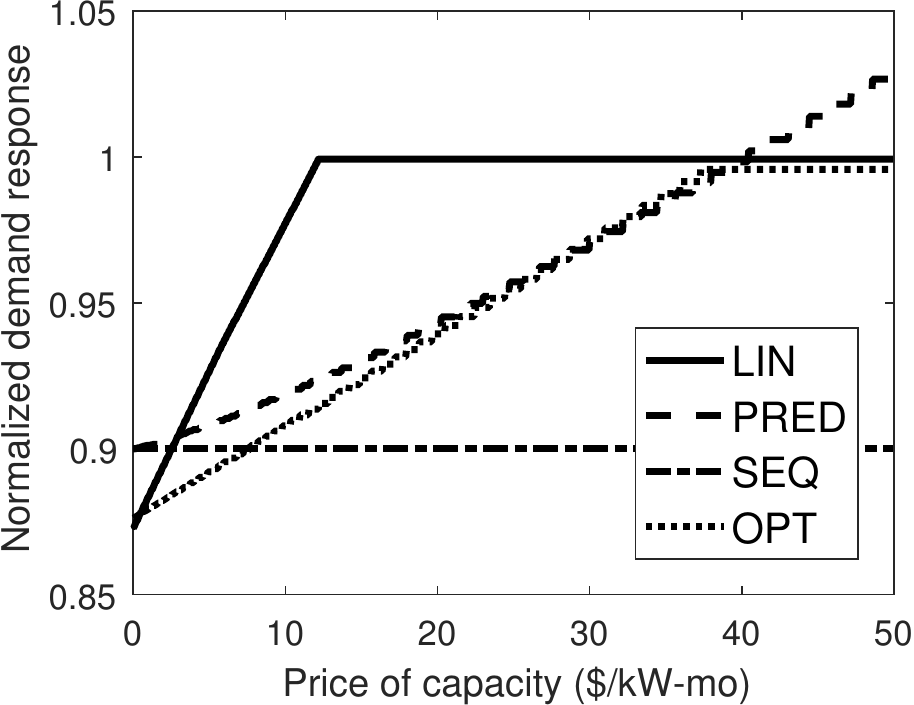}}
			\label{FIG:COMPSCH_DR}}
		\subfigure[Leftover Mismatch]{{\includegraphics[width=0.49\columnwidth]{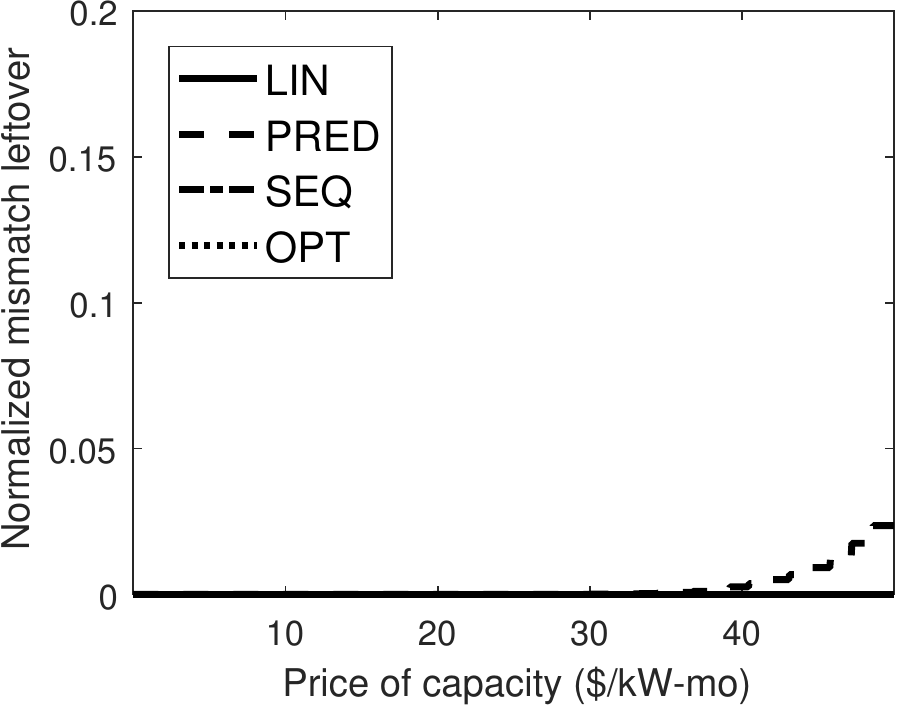}}
			\label{FIG:COMPSCH_VF}}
		\vspace{-0.15in}
		\caption{Comparing the PRED and LIN with OPT and SEQ for different capacity prices. The Demand Response (c) and Capacity Mismatch (d) are normalized by the average deviation of $D$.}
		\label{f.COMPSCH}
	\end{center}
	\vspace{-0.05in}
\end{figure*}

We compare our proposed algorithms PRED and LIN with two baselines: the offline optimal solution OPT as the lower bound for social cost and the sequential algorithm SEQ as the upper bound. %\yue{But there is no SEQ in social cost curves.}

\vspace{-0.05in}
\subsection{Experimental Setup}
\revised{We aim to use realistic parameters in the experimental setup to evaluate the performance of our proposed algorithms, and to understand the solution properties of each algorithm.
We model an LSE supplying power to 300 customers with a demand response timeslot that is five minutes long.}
\removed{We aim to use realistic parameters in the experimental setup to evaluate the performance of our proposed algorithms, and to understand the properties of solutions to each algorithm.
We model an LSE supplying power to 300 customers and a demand response timeslot that is five minutes long.}

%\todo{Josh, please add the description of parameters you used in the evaluation}

\revised{The LSE first purchases capacity for which we model the cost as a linear function $c\kappa$ with the parameter $c\in\$[0.01,50]$/kW-mo to cover a broad range of grid capacity type costs and amortized storage.
The generation cost function for the LSE is modeled as a quadratic function $A\left(D-\sum_ix_i\right)^2$ with the parameter $A=\$0.1/12^2/\text{kW}^2$.}
\removed{The LSE must first purchase capacity for which we model the cost as a linear function $c\kappa$ with a cost parameter $c\in\$[0.01,50]$/kW-mo to observe over a broad range of grid capacity type costs and amortized grid storage.
The generation cost function for the LSE is modeled as a quadratic function $A\left(D-\sum_ix_i\right)^2$ with a cost parameter of $A=\$0.1/12^2/\text{kW}^2$.}
For this cost function setting, a deviation of 60kW for five minutes is equivalent to an energy cost of \$0.50/kWh and matches the intuition that larger mismatches are increasingly more expensive to manage.

\revised{To model each customer's particular load demand, we utilize the traces obtained from the UMass Trace Repository which give very granular load measurements from three homes (cf. Figure \ref{FIG:HOMESABCEMPCDF} for cumulative distributions)~\cite{barker2012smart}.
To make more accurate general conclusions we need more customers' load data.
However, the focus of this evaluation is to compare the performance of the different algorithms presented and demonstrate how to employ customer data to them.
We model the cost incurred by each customer to change its consumption as a quadratic cost $a_ix_i^2$ with the parameter $a_i\in\$[1,10]/12^2\text{kW}^2$.
Under these settings, a consumption decrease of 0.3kW for five minutes would cost the customer an energy price equivalent to \$0.025-0.25/kWh.}
\removed{Each customer has a particular demand of load.
To model this we utilize the traces obtained from the UMass Trace Repository which give very granular load measurements from three homes (cf. Figure \ref{FIG:HOMESABCEMPCDF} for cumulative distributions)~\cite{barker2012smart}.
To make more accurate general conclusions we need more customers' load data.
However, the focus of this evaluation is to compare the performance of the different algorithms presented and demonstrate how to employ customer data to them.We model the cost incurred by each customer to change its consumption as a quadratic cost $a_ix_i^2$ with a cost parameter $a_i\in\$[1,10]/12^2\text{kW}^2$.
With these settings, a consumption decrease of 0.3kW for five minutes would cost the customer an energy price equivalent to \$0.025-0.25/kWh.}
To generate customer cost uncertainties we first randomly choose $\tilde{a}_i$ from a bounded normal distribution. Each $\tilde{a}_i$ is used as a mean to further generate the training and test sets separately from a bounded normal distribution.  The $\hat{a}_i$ approximated by the LSE is then formed by averaging the training set values for customer $i$.

%%%%%%%IN SIGMETRICS SUBMISSION:
%Each customer has a particular demand of load.
%To model this we utilize the traces obtained from the UMass Trace Repository which give very granular load measurements from three homes (cf. Figure \ref{FIG:HOMESABCEMPCDF} for cumulative distributions)~\cite{barker2012smart}.
%We model the cost incurred by each customer to change its consumption as a quadratic cost $a_ix_i^2$ with a cost parameter $a_i\in\$[1,10]/12^2\text{kW}^2$.  With these settings, a consumption decrease of 0.3kW for five minutes would cost the customer an energy price equivalent to \$0.025-0.25/kWh.

Renewable generation is incorporated into our simulations by using the ISO-NE's data on hourly wind power production for the same dates as the UMass data~\cite{isonefuelmix}.  A week-long sample and cumulative distribution for all the data are shown in Figure \ref{f.Wind_trace}. The capacity is set to 100kW by default, while we vary it in the sensitivity analysis.
The training and test sets for which we have trace data (Homes A,B,C and ISO-NE wind production) were made by first randomly separate each trace into a raw training and raw test set.  For each set we generate customer traces by bootstrapping 100 customers from each of the UMass Homes A/B/C.  We also do this for the ISO-NE wind data which is first normalized by the maximum power output so that we can scale wind power.
The demand mismatch $D$ over two days and the corresponding CDF are depicted in Figure~\ref{f.D_trace}.

\begin{figure*}[!ht]
	\begin{center}
		\subfigure[Social Cost]{{\includegraphics[width=0.49\columnwidth]{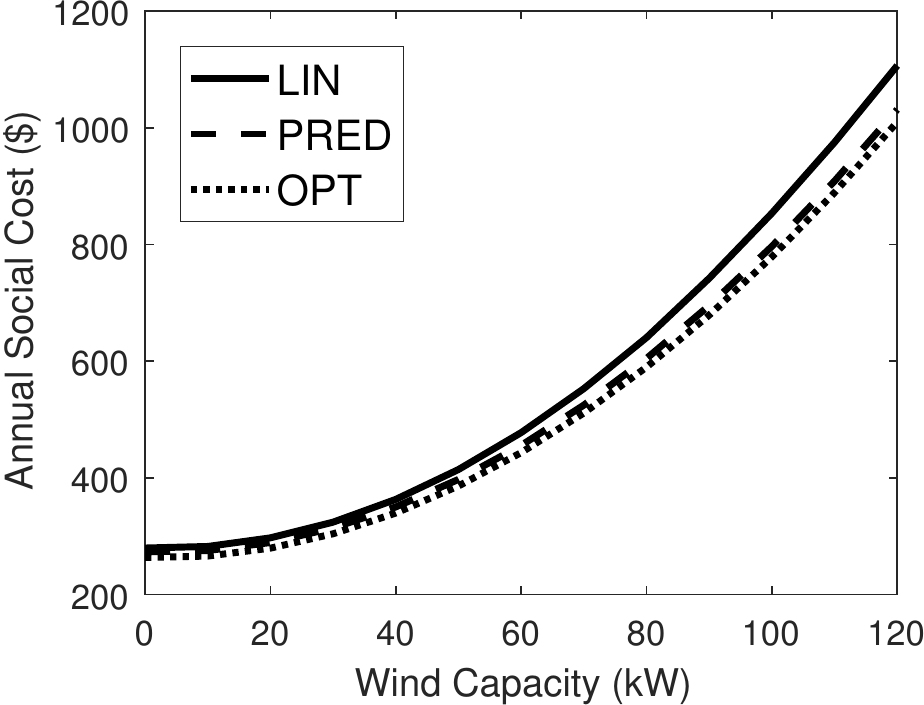}}
			\label{FIG:COMPSCH_SOC_W}}
		\subfigure[Optimal Capacity]{{\includegraphics[width=0.49\columnwidth]{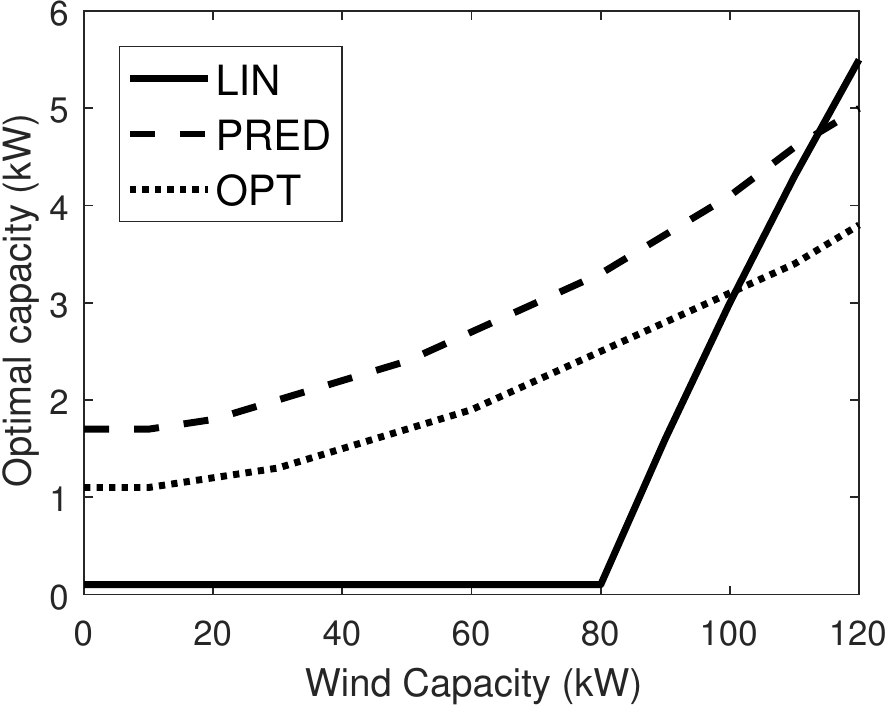}}
			\label{FIG:COMPSCH_OPTCAP_W}}
		\subfigure[Demand Response]{{\includegraphics[width=0.49\columnwidth]{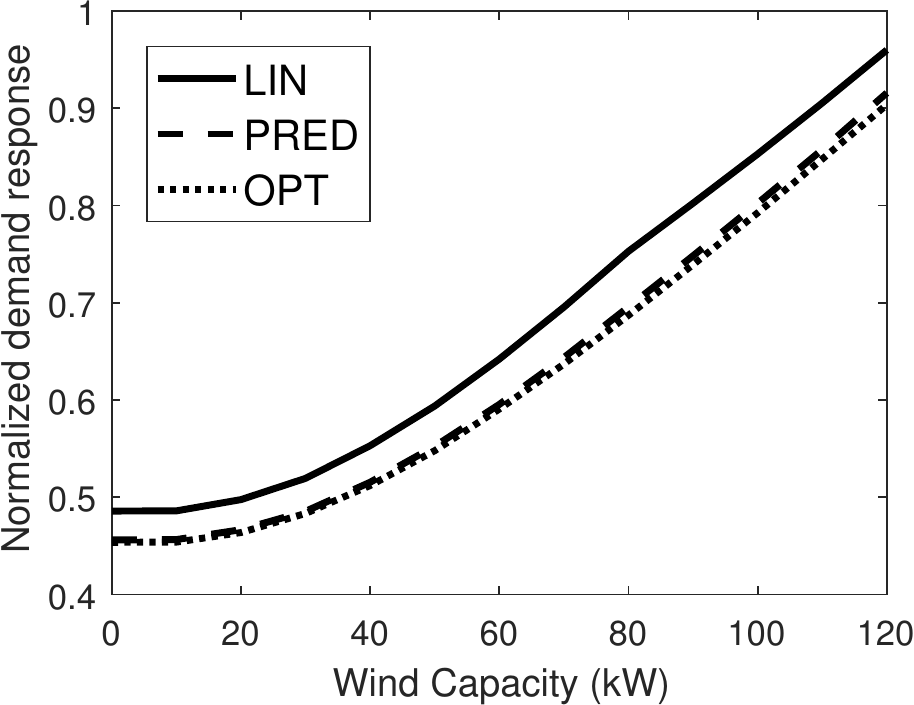}}
			\label{FIG:COMPSCH_DR_W}}
		\subfigure[Leftover Mismatch]{{\includegraphics[width=0.49\columnwidth]{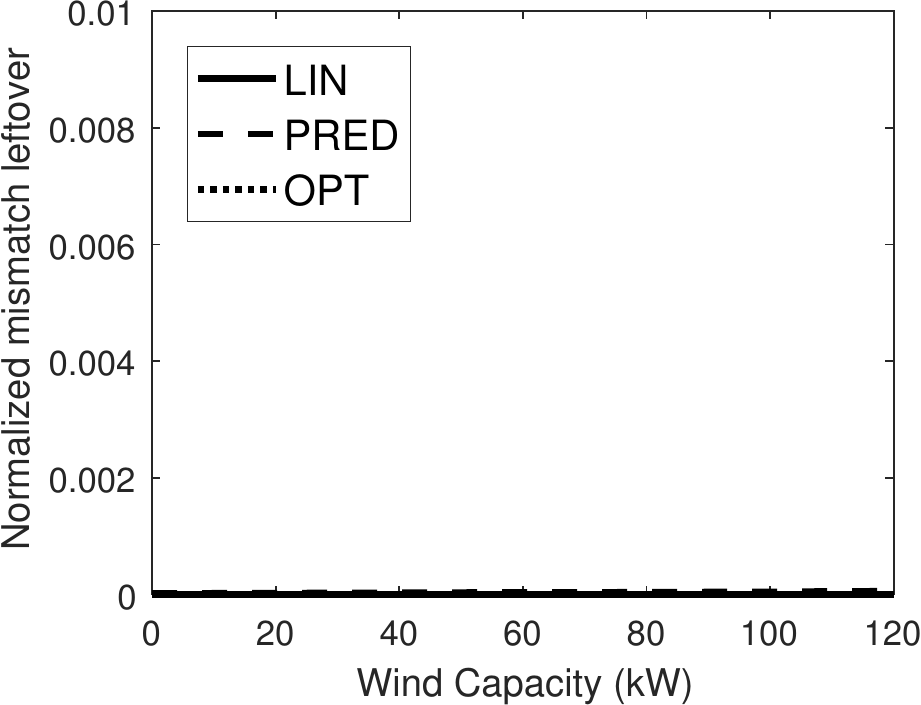}}
			\label{FIG:COMPSCH_VF_W}}
		\vspace{-0.15in}
		\caption{Comparing the Linear Policy and Price Prediction Scheme with the Offline optimal solution and Sequential policy for different amounts of wind power capacity.  The cost of capacity was set to \$1/kW-mo.  The Demand Response (b) and Capacity Mismatch (d) are normalized by the average deviation of $D$.}
		\label{f.COMPSCH_W}
	\end{center}
	\vspace{-0.05in}
\end{figure*}

\begin{figure}
	\begin{center}
		\includegraphics[width=0.6\columnwidth]{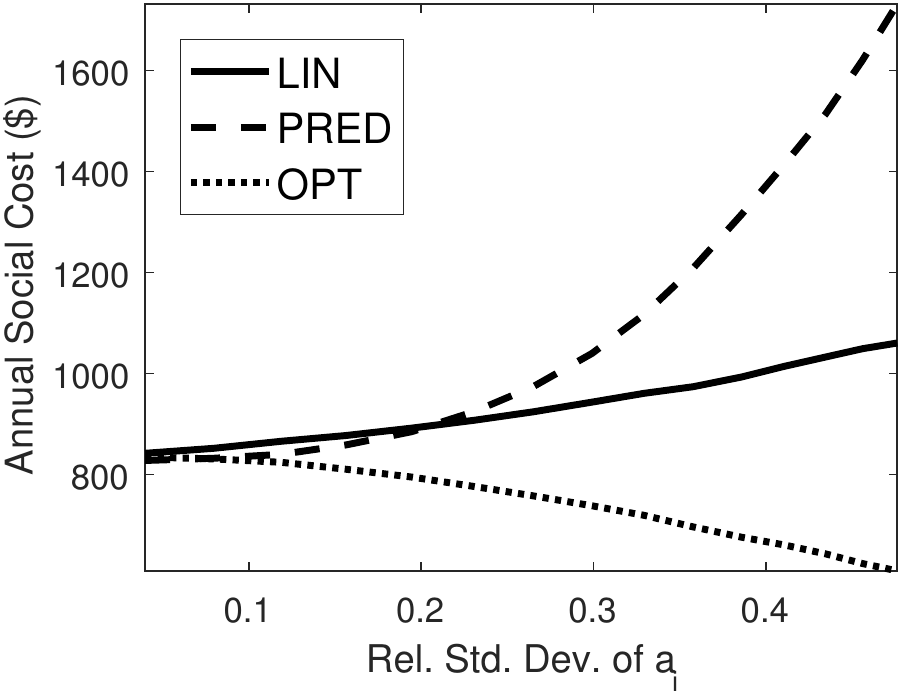}
		\vspace{-0.1in}
		\caption{Comparing the annual social cost with different Relative Standard Deviations (RSD) of the customer cost parameters $a_i$.  The cost of capacity was set to \$10/kW-mo.}
		\label{f.COMPSCH_AVAR}
	\end{center}
	\vspace{-0.05in}
\end{figure}

\vspace{-0.05in}
\subsection{Performance evaluation}
The evaluation of our algorithms and the cost savings will be organized around the following topics.

\vspace{-0.05in}
\subsubsection*{Convergence of the distributed algorithm}

%\todo{explain the LSE line}
%\todo{add lines about individual $\alpha_i$ and $\gamma_i$, and change x-axis from 1 to 11.}

We start by considering the convergence of our distributed algorithm for LIN.
\revised{Figure~\ref{FIG:DISTALG_SOCCOST} illustrates that the social cost of the distributed algorithm converges quickly to that of the centralized algorithm which validates the convergence analysis of Theorem \ref{th:convergence}.
For the parameters, even if we start with $\alpha_i=0$, it quickly converges to the optimal $\alpha_i$ which are different among customers as they have different parameters, i.e., $a_i$ in their cost functions.
For the other LIN parameters, $\gamma_i$ and $\beta_i$ stay at zero across all customers since the optimal $\gamma_i=0$ for zero-mean deviations and optimal $\beta_i=0$ for quadratic cost functions.
Thus, the remaining plots show only the centralized algorithm.}
\removed{Figure~\ref{FIG:DISTALG_SOCCOST} illustrates that the social cost of the distributed algorithm converges quickly to that of the centralized algorithm.
It validates the convergence analysis for the distributed algorithm.
For the parameters, even if we start with $\alpha_i=0$, it quickly converges to the optimal $\alpha_i$ and stays there.
In particular, the $\alpha_i$'s are different among customers as they have different parameters, i.e., $a_i$ in their cost functions.
On the other hand, $\gamma_i$ stays at zero across all customers, consistent with our analysis.
The situation is similar to $\beta$ as the optimal value is $0$ for quadratic cost functions.
Therefore, the remaining of the plots show only the centralized algorithm.}

\vspace{-0.05in}
\subsubsection*{How well do PRED and LIN perform?}

%\todo{remove the original LIN and replace by LIN-TRN, we only need one line}

We move to this key question of the evaluation. 
To evaluate the benefits in terms of social cost savings for our algorithms, Figure~\ref{f.COMPSCH} compares the social cost of PRED and LIN to those under the offline optimal OPT and sequential algorithm SEQ.
\revised{Since OPT requires knowledge about each realization of the parameters, i.e., the exact renewable generation, customers' demand and cost functions at each timeslot, it is not practically feasible.}
\removed{Recall that OPT requires knowledge about each realization of the parameters, i.e., the exact renewable generation, customers' demand and cost functions at each timeslot, and therefore not practically feasible.}
We simply use this as a lower bound of the social cost.
However, SEQ makes conservative capacity planning decision about $\kappa$ first, and then performs similar to PRED.
By comparing to the cost of SEQ, the benefit of joint optimization of capacity planning on $\kappa$ and real-time demand response is highlighted.

The social costs of PRED and LIN, shown in Figure~\ref{FIG:COMPSCH_SOC}, are no more than $10\%$ higher compared to the fundamental limit OPT. %\zhenhua{How much cost savings compared to SEQ?}
\removed{This highlights that these two algorithms are highly effective.}
Importantly, when the capacity price decreases, the gap between PRED/LIN and OPT becomes smaller.
Since currently the technology improvements are reducing the capacity price, 
from Figure~\ref{FIG:COMPSCH_SOC}, this gap %between PRED/LIN and OPT 
will decrease further.
The social cost of SEQ increases rapidly with increasing capacity prices because of the conservative 90kW capacity used to protect the system from \emph{any} leftover mismatch.

When capacity price increases, all algorithms have higher costs as expected.
However, PRED increases faster than LIN.
\revised{This is because LIN starts to rely more on the linear contracts than capacity, so it becomes more immune to the capacity price increases.
This provides the first guideline to choose between PRED and LIN in practice.}
\removed{This is because LIN starts to rely on the demand response as there is a linear contract, so it becomes less sensitive and more immune to the price increase.
This actually provides the first guideline for how to pick between PRED and LIN in practice.}
The second guideline depends on the level of customers' uncertainties, and will be discussed soon.

Interestingly, for PRED, LIN, and OPT, the social cost flatten when the price exceeds some threshold.
This is because when capacity price is too high, the LSE would purchase little capacity, as shown in Figure~\ref{FIG:COMPSCH_OPTCAP} and pay more to get demand response from customers, illustrated by Figure~\ref{FIG:COMPSCH_DR}.
\revised{The downside in the case of PRED, however, is the increased remaining mismatch that is not balanced by demand response, as shown in Figure~\ref{FIG:COMPSCH_VF}.}
\removed{The downside, however, is the increased remaining mismatch that is not balanced by demand response, as shown in Figure~\ref{FIG:COMPSCH_VF}.}

To understand more behind the social costs, Figures~\ref{FIG:COMPSCH_OPTCAP} and \ref{FIG:COMPSCH_DR} illustrate the optimal capacities and the amount of demand response extracted for all algorithms.
\revised{As the price increases, the capacities decrease, and the amounts of extracted DR increase to compensate for the reduced reserve capabilities.}
\removed{As the price increases, the capacities decrease, and the amounts of extracted DR increase to compensate for the reduced reserve capabilities due to lower capacities.}
Note in Figure~\ref{FIG:COMPSCH_OPTCAP}, PRED always has a higher capacity than OPT because PRED needs some additional capability to handle the estimation errors.
\revised{Similarly, when setting the price, PRED picks a higher price to extract more demand response on average, as shown in Figure~\ref{FIG:COMPSCH_DR}, to act as a buffer in case some customers' responses are overestimated.}
\removed{Similarly, when setting the price, PRED picks a higher price to extract more demand response on average, as shown in Figure~\ref{FIG:COMPSCH_DR}, so there is some room if some customers' responses are overestimated.}
The increased cost due to uncertainties in PRED compared to OPT can be considered as a ``risk premium''.
We will vary the customers' uncertainties to provide more information in Figure~\ref{f.COMPSCH_AVAR}.

%Throughout Figures~\ref{FIG:COMPSCH_SOC}-\ref{FIG:COMPSCH_DR}, we can see the clear switching points as price increases.
%These points correspond to different prices under PRED, LIN, and OPT. \zhenhua{the following needs more clarifications}
%LIN has the smallest threshold because the linear contract is highly reliable in extracting DR.
%OPT can fully utilize the reserved capacity, and therefore will pick a larger $\kappa$.
%PRED needs to reserve even more capacities to handle the uncertainties.
% Capacity

\begin{figure}[!ht]
	\begin{center}
		\subfigure[Social Cost]{{\includegraphics[width=0.4\columnwidth]{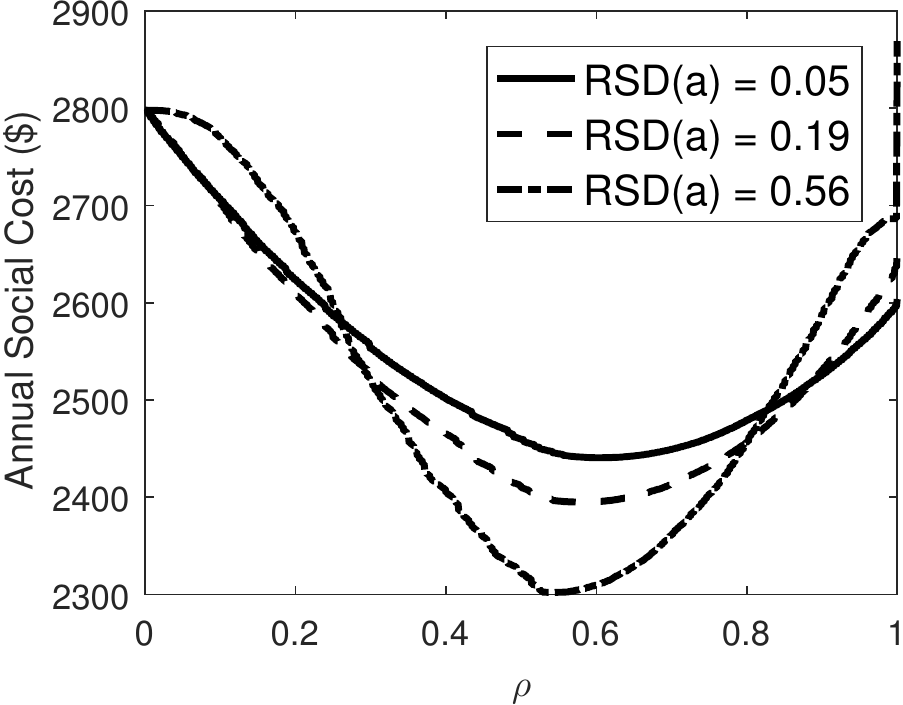}}
			\label{FIG:FDR_SOC}}
		\subfigure[Leftover Mismatch]{{\includegraphics[width=0.4\columnwidth]{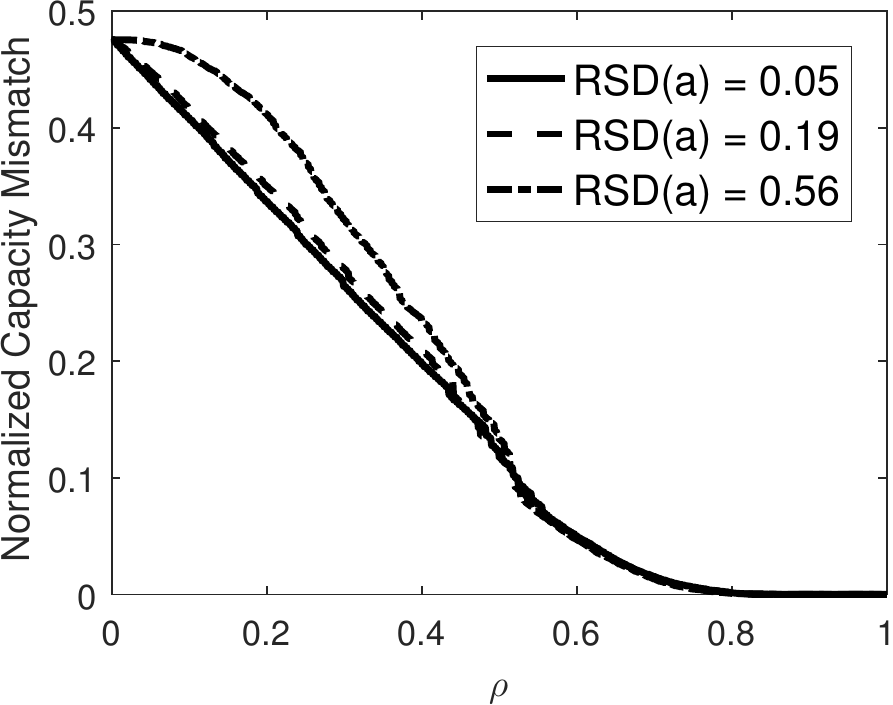}}
			\label{FIG:FDR_VF}}
		\vspace{-0.2in}
		\caption{The effect of the level of commitment on (a) the annual social cost and (b) the normalized capacity mismatch for different amounts of Relative Standard Deviations (RSD) on the customer cost parameter $a$.}
		\label{f.FDR}
	\end{center}
	\vspace{-0.15in}
\end{figure}

\vspace{-0.1in}
\subsubsection*{Impacts of the renewable penetration level.}
%\todo{Again, we only need one line for capacity for LIN}
\revised{As renewable energy installation is rapidly growing, we evaluate the impacts of installed renewable energy capacities.}
\removed{As renewable energy installations are rapidly increasing, we evaluate the impacts of installed renewable energy capacities.}
Illustrated by Figure~\ref{f.COMPSCH_W}, social cost increases with more renewable energy installed as there is larger mismatch to be balanced.
When the wind capacity is too large, LIN can no longer just rely on the demand response, but needs a higher capacity, as shown in Figure~\ref{FIG:COMPSCH_OPTCAP_W}.
Throughout the figures, our proposed algorithms PRED and LIN perform very closely to the offline OPT.

\vspace{-0.05in}
\subsubsection*{Impacts of customer uncertainties.}
%\todo{the number on x-axis is very small, what does this mean?}
%\todo{let's increase the range so we an see LIN and PRED crosses}
As another key message of the paper, we investigate the impact of the customer cost function uncertainties through the parameter $a_i$.
Here we fix the average cost parameter $\mathbb{E}[a_i]$ and vary the variance of $a_i$.

Figure~\ref{f.COMPSCH_AVAR} illustrates the social cost of the algorithms under different relative standard deviations.
Interestingly, as the variance increases, OPT actually has a lower cost.
The reason is that since OPT is the offline solution, it can utilize the realization of each $a_i(t)$ to target the customers with small cost parameters for each timeslot $t$.
As the variance increases, this targeted set of customers will have lower parameters, which leads to lower social cost in OPT.

PRED has the same cost as OPT when the variance is 0, but the gap becomes larger as the variance increases.
Unlike the case of increasing uncertainty from larger renewable capacities, the increased uncertainties due to a larger variance in $a_i$ cannot be well handled by PRED.
However, the cost of LIN does not change much. 
This suggests the second rule to pick between PRED and LIN: with larger uncertainties, LIN performs relatively better.

%On the other hand, the cost of LIN increases. The reason is that LIN picks a similar contract and capacity $\kappa$. \zhenhua{Then the cost should be the same?}

\vspace{-0.05in}
\subsection{Flexible Commitment Demand Response}
\label{sec:fcdr}
\vspace{-0.05in}
\subsubsection{LIN with flexibility: LIN$^+(\rho)$}
%\todo{Josh, please polish the following description.}

\revised{One potential drawback of LIN is that customers are forced to follow the specified linear policy and may from time-to-time face a very high cost to follow the policy.
From the social perspective, it is not beneficial to force a customer under much higher than average costs to provide demand response as compared to the LSE cost for tolerating more mismatch.}
\removed{One potential drawback of LIN is that customers are forced to follow the linear policy specified.
In some cases, customers may face a very high cost to follow the policy, e.g., when there are some critical jobs to be finished, represented by a larger $a_i(t)$.
From the social perspective, it is not beneficial to force a customer under much higher than average costs to provide demand response, as this would force them to incur higher costs compared to the LSE tolerating more mismatch.}

Motivated by this observation and the regulation service program described in Section~\ref{sec:bg}, we modify the LIN policy to add some flexibility limited by a single parameter $\rho$.
We call the new algorithm LIN$^+(\rho)$. 
Under LIN$^+(\rho)$, each customer has up to $1-\rho$ (in percentage) of the timeslots in which they do not need to follow the policy according to her realized $\alpha_i(t)$.
In other words, she may pick her original $d_i(t)$ with $x_i=0(t)$ for such timeslots.

%%%%%%IN SIGMETRICS SUBMISSION:
%Motivated by this observation and the regulation service program described in Section~\ref{sec:bg}, we modify the LIN policy to add some flexibility limited by a single parameter $\rho$.
%We call the new algorithm LIN$^+(\rho)$. 
%Under LIN$^+(\rho)$, the customers have up to $\rho$ (in percentage) of the timeslots that do not need to follow the policy according to their realized $\alpha_i(t)$.
%In other words, they may pick their original $d_i(t)$ with $x_i=0(t)$ for such timeslots.

\revised{A caveat is that we add is to only allow the customers to pick such timeslots according to their cost functions, e.g., through the parameters $\alpha_i(t)$ regardless of $D(t)$.
From the social perspective, allowing customers with higher $\alpha_i(t)$ to violate results in lower social costs.
However if the customers had full freedom, most customers would choose to violate when $D(t)$ is high which is when the LSE needs DR the most.
In reality, this condition can be enforced by calculating $\mathbb{E}_i[D|i\;\text{violates}]$ for the timeslots that a particular customer chooses to violate.}
\removed{Another caveat is that we only allow the customers to pick such timeslots according to their cost functions, e.g., through the parameters $\alpha_i(t)$, instead of the real-time $D(t)$.
The reason is that from the social perspective, allowing the customers with higher $\alpha_i(t)$ to violate results in lower social costs, but a higher $D(t)$ is seen by all customers and this is when the LSE needs DR the most.
Allowing some to violate means others need to take more, so there is a negative externality here.
From classical economic theory, this may lead to significant efficiency loss, or formally, the price of anarchy.
In reality, this can be enforced by calculating $\mathbb{E}_i[D|i\;\text{violates}]$ for the timeslots that a particular customer chooses to violate.}
If this value is significantly higher than $\mathbb{E}[D]$, then the customer faces some penalty.

Note that although we add the flexibility to LIN in this paper, the approach is in fact general and can be applied to a wide range of fully committed programs as follows.
Each customer is allowed to violate her commitment by up to $1-\rho$ (in percentage) of the timeslots.
A customer can run a local optimization to decide which timeslots to violate, while the LSE can add some constraints to align the local optimization with the social optimization, such as those described above.

%%%%%IN SIGMETRICS SUBMISSION:
%Note that although we add the flexibility to LIN in this paper, the approach is in fact general and can be applied to a wide range of fully committed programs as follows.
%The customers are allowed to violate their commitment by up to $\rho$ (in percentage) of the timeslots.
%A customer can run a local optimization to decide during what timeslots to violate, while LSE can add some constraints to align the local optimization with the social optimization, such as those described above.

\vspace{-0.1in}
\subsubsection{Cost versus mismatch leftover tradeoff}
\label{sec:fcdr-evaluation}
%\todo{add FCDR to Figure~\ref{f.COMPSCH}}.

Now we evaluate the additional cost savings brought by LIN$^+(\rho)$.
Figure~\ref{f.FDR} highlights the tradeoff between cost and mismatch leftover that cannot be balanced due to capacity constraints.
Recall that there is little mismatch leftover in LIN, or equivalently, LIN$^+$(1).

Depicted in Figure~\ref{FIG:FDR_SOC}, as $\rho$ decreases from 1, the social cost first decreases due to the fact that some customers with very high $a_i(t)$ are allowed to not provide demand response.
As $\rho$ continues to decrease, we have more customers not providing demand response and the cost actually goes up again.
This is because the penalty for the mismatch becomes larger than the costs of customers to provide demand response.

%\zhenhua{need to explain what the normalized numbers mean.}
On the other hand, Figure~\ref{FIG:FDR_VF} highlights that the mismatch leftover increases with lower $\rho$ as more customers are allowed to not provide demand response.
\revised{Importantly, the increase is very slow at the beginning when we decrease $\rho$ from $1$ because there is enough capacity to handle the small deficit in demand response.}
\removed{Importantly, the increase is very slow at the beginning when we decrease $\rho$ from $1$ because there is enough capacity to handle the small deficit in demand response when $1-\rho$ is small.}
However, the social cost decreases a lot.
This actually highlights the great potential to decrease $\rho$ appropriately to achieve a lower social cost but still have little mismatch leftover.
For instance, in our case study shown in Figure~\ref{f.FDR}, $\rho=0.8$ achieves cost savings 7-8\% with less than 1\% of mismatch leftover.
Recall that the gap between LIN and the offline optimal OPT is about 10\%. 
This means LIN$^+(\rho^*)$ achieves near optimal cost.

%Key messages:
%\begin{itemize}
%\item With lower $\rho$, social cost first decreases and then increases.
%\item Remaining mismatch increases with lower $\rho$, but very slowly at the beginning
%\item It is possible to pick some $\rho<1$ to achieve a good tradeoff.
%\end{itemize}

%\begin{figure}
%	\includegraphics[width=0.95\columnwidth]{FDR_soc_frq}
%	\caption{Normalized social cost and capacity violation frequency versus the level of commitment.}
%	\label{f.FDR}
%\end{figure}

\vspace{-0.1in}
\section{Conclusion}
\vspace{-0.05in}
Extracting reliable demand response from customers is crucial to reduce/defer new energy storage/reserve installation and has significant environmental benefits.
However, existing demand response programs suffer from either low participation due to strict commitment, or not being reliable in voluntary programs.
Moreover, the capacity planning for energy storage/reserve is traditionally done without considering the demand response capabilities, which incurs inefficiencies.
Additionally, the uncertainties on the customers costs in providing demand response receive little attention in literature.
In this paper, we first model the problem as a stochastic optimization problem, and then design two online algorithms, PRED and LIN, to jointly optimize the capacity and demand response program design. 
PRED utilizes historical data to estimate the cost functions, while LIN is a linear contract between the LSE and customers.
We further design a distributed algorithm with guaranteed convergence for obtaining the optimal parameters for LIN. 
In addition, we design a flexible commitment demand response program LIN$^+(\rho)$ by adding flexibility to LIN.
Numerical simulations highlight the near optimal performance of the proposed algorithms, and large cost savings compared to a widely used baseline SEQ especially when capacity price is not negligible. 

Regarding further directions, the most exciting one is to incorporate power network constraints into our optimization framework. 
This will bring some new challenges and insights. 
In particular, the parameter $\beta_i$ for the customer's own prediction error in LIN will no longer be zero due to the fact that errors can be handled easier locally because of the loss over power lines and the line capacity constraints.
\vspace{-0.1in}
\section{Acknowledgments}
\vspace{-0.05in}
We thank all of the reviewers and our shepherd Shaolei Ren for their valuable feedback. This research is supported by NSF grants CNS-1464388 and CNS-1617698. 
\vspace{-0.05in}

\bibliographystyle{ACM-Reference-Format}
\bibliography{reference}

%%%%There is slack to expand before the "Distributed Algorithm for LIN" block.
%%%%There is slack to expand Section 6.3 "Flexibe Commitment Demand Response".

\appendix
%\vspace{-0.1in}
\section*{Appendix}
\section{Karush-Kuhn-Tucker conditions}
\label{sec:KKT}
%\zhenhua{Joshua, please update the KKT condition for $\bx(t)$}

Karush-Kuhn-Tucker (KKT) conditions for optimality of $\bx(t)$ at time $t$ in Problem \eqref{opt:OFFLINE1_REALTIME} are:
\vspace{-0.05in}
\begin{subequations}\label{eq:KKTgen2}
	\begin{equation}\label{eq:KKTgen2stat}
		C'_i(x_i^*(t))-C'_\text{g}\left(D(t)-\sum_{j\in\mathcal{V}}x_j^*(t)\right)+\underline{\theta}^*-\overline{\theta}^*=0,\quad\forall i\in\mathcal{V}
	\end{equation}
	\begin{equation}\label{eq:KKTgen2comp2}
		\underline{\theta}^*\left(D(t)-\sum_{j\in\mathcal{V}}x_j^*(t)+\kappa\right)=0
	\end{equation}
	\begin{equation}\label{eq:KKTgen2comp}
		\overline{\theta}^*\left(D(t)-\sum_{j\in\mathcal{V}}x_j^*(t)-\kappa\right)=0
	\end{equation}
	\begin{equation}\label{eq:KKTgen2dual}
		\underline{\theta}^* \geq 0,\quad \overline{\theta}^* \geq 0
	\end{equation}
	\begin{equation}\label{eq:KKTgen2prim}
		-\kappa\leq D(t)-\sum_{j\in\mathcal{V}}x_j^*(t)\leq\kappa
	\end{equation}
\end{subequations}
where $(\underline{\theta},\overline{\theta})$ are the dual variables for the constraint \eqref{const:Capacity_RT}, \eqref{eq:KKTgen2stat} are the first-order stationary conditions, \eqref{eq:KKTgen2comp} and \eqref{eq:KKTgen2comp2} are the complementary slackness conditions, \eqref{eq:KKTgen2dual} are the dual feasibility conditions, and \eqref{eq:KKTgen2prim} are the primal feasibility conditions.
%\yue{(2) is not well defined yet. Is the following posterior optimum?}

To better understand the physical implications of these conditions, we start from \eqref{eq:KKTgen2comp2} and \eqref{eq:KKTgen2comp}. 
When the constraint \eqref{eq:KKTgen2prim} on $\kappa$ is non-binding, i.e., $-\kappa<D(t)-\sum_{j\in\mathcal{V}}x_j^*(t)<\kappa$, we have $\underline{\theta}^*=\overline{\theta}^*=0$.
Then \eqref{eq:KKTgen2stat} implies $C'_i(x_i^*(t))=C'_\text{g}\left(D(t)-\sum_{j\in\mathcal{V}}x_j^*(t)\right)$, meaning that the marginal cost for each customer to provide demand response is the same, all of which is equal to the LSE's marginal cost to tolerate the mismatch.
%\zhenhua{I believe we should have more equations.}

%\zhenhua{Joshua, please update the conditions for $\kappa^*$}
The KKT condition of optimality for capacity $\kappa$ in Problem \eqref{opt:OFFLINE1} is:
\begin{equation}
	C'_\text{cap}(\kappa^*)+\frac{\partial}{\partial\kappa}\mathbb{E}_{\delta_i,\delta_r}\left[R(\kappa^*;t)\right]=0
\end{equation}
where $R(\kappa;t)$ is the optimal value of Problem \eqref{opt:OFFLINE1_REALTIME}.
We can interchange the derivative operator and expectation operator if either of the following conditions are true:
(i) $(\delta_i,\delta_r)$ are from continuous probability distributions and $R(\kappa;t)$ is continuously differentiable for all $(\delta_i,\delta_r)$ (from the Leibniz Integral Rule); (ii) $(\delta_i,\delta_r)$ are from discrete probability distributions (differentiation is a linear operator).  This results in the following:
\begin{equation}
	C'_\text{cap}(\kappa^*)+\mathbb{E}_{\delta_i,\delta_r}\left[\frac{\partial}{\partial\kappa}R(\kappa^*;t)\right]=0
\end{equation}

From Lemma \ref{lm:convexKAPPA}, the negative of the sum of the dual variables $\underline{\theta}+\overline{\theta}$ for constraint \eqref{const:Capacity_RT} is the gradient of $R(\kappa;t)$ w.r.t. $\kappa$. This allows us to substitute $\underline{\theta}^*+\overline{\theta}^*$ for the partial derivative:
\begin{equation}\label{eq:POSToptKappa}
	C'_\text{cap}(\kappa^*) = \mathbb{E}_{\delta_i,\delta_r}\left[\theta(\kappa^*;t)\right]
\end{equation}
where we use the notation of $\theta(\kappa;t)$ as a function to represent the sum of the optimal dual variables $\underline{\theta}^*+\overline{\theta}^*$ from the KKT conditions \eqref{eq:KKTgen2} for given $(\kappa;t)$.

\section{Proofs}
\label{sec:app-1}

\begin{proof}[Proof of Theorem 5]
	(Based on \cite{boyd2014subgradient} and \cite{bertsekas1999nonlinear} Chapter 6.)  At iteration $k$, let $\Lambda^{(k)}:=(\boldsymbol{\pi}^{(k)},\boldsymbol{\lambda}^{(k)},\boldsymbol{\mu}^{(k)})$ be the current dual prices, $Q(\Lambda^{(k)})$ be the partial dual function value of \eqref{opt:GENall2D}, $G^{(k)}$ be the subgradient of the dual function, and $\eta^{(k)}$ be the step size.  Also note that the subgradient at iteration k for constraint \eqref{const:SEPARATE} is $G^{(k)}:=(\boldsymbol{\alpha}^{(k)},\boldsymbol{\beta}^{(k)},\boldsymbol{\gamma}^{(k)})-(\mathbf{u}^{(k)},\mathbf{v}^{(k)},\mathbf{w}^{(k)})$.
	
	Start with the squared Euclidean distance between the iteration $(k+1)$ and optimal dual price vectors and substitute $\Lambda^{(k+1)}$ with the dual price update in Equation \eqref{step:DUALupdate}:
	\begin{equation}
	||\Lambda^{(k+1)}-\Lambda^*||_2^2=||\Lambda^{(k)}+\eta^{(k)}G^{(k)}-\Lambda^*||_2^2.
	\end{equation}
	Expanding out the terms on the RHS becomes:
	\begin{equation}
		||\Lambda^{(k+1)}-\Lambda^*||_2^2\nonumber
	\end{equation}
	\begin{equation}
		=||\Lambda^{(k)}-\Lambda^*||_2^2-2\eta^{(k)}G^{(k)\text{T}}(\Lambda^*-\Lambda^{(k)})+(\eta^{(k)})^2||G^{(k)}||_2^2.
	\end{equation}
	The first-order condition of the subgradient is $G^{(k)\text{T}}(\Lambda^*-\Lambda^{(k)})\geq Q(\Lambda^*)-Q(\Lambda^{(k)})$ which when applied to the equation above gives us:
	\begin{equation}
	||\Lambda^{(k+1)}-\Lambda^*||_2^2\nonumber
	\end{equation}
	\begin{equation}
	\leq||\Lambda^{(k)}-\Lambda^*||_2^2-2\eta^{(k)}(Q(\Lambda^*)-Q(\Lambda^{(k)}))+(\eta^{(k)})^2||G^{(k)}||_2^2.
	\end{equation}
	Apply the above inequality recursively from iteration $k$ to $1$ which results in:
	\begin{equation}
	||\Lambda^{(k+1)}-\Lambda^*||_2^2\nonumber
	\end{equation}
	\begin{equation}
	\leq||\Lambda^{(1)}-\Lambda^*||_2^2-2\sum_{i=1}^{k}\eta^{(i)}(Q(\Lambda^*)-Q(\Lambda^{(i)}))+\sum_{i=1}^{k}(\eta^{(i)})^2||G^{(i)}||_2^2.
	\end{equation}
	Since $||\Lambda^{(k+1)}-\Lambda^*||_2^2\geq 0$, then the LHS of the above inequality is lower bounded by zero and the middle term of the RHS can be brought to the other side:
	\begin{equation}
	2\sum_{i=1}^{k}\eta^{(i)}(Q(\Lambda^*)-Q(\Lambda^{(i)}))\leq||\Lambda^{(1)}-\Lambda^*||_2^2+\sum_{i=1}^{k}(\eta^{(i)})^2||G^{(i)}||_2^2.
	\end{equation}
	Let us define $Q(\Lambda_\text{best}^{(k)}):=\max_{i=1,...,k}Q(\Lambda^{(i)})$ as the best dual value so far in $k$ iterations.  This gives a new lower bound to the LHS of the above inequality and allows the summation to be factored out:
	\begin{equation}
	2(Q(\Lambda^*)-Q(\Lambda_\text{best}^{(k)}))\sum_{i=1}^{k}\eta^{(i)}\leq||\Lambda^{(1)}-\Lambda^*||_2^2+\sum_{i=1}^{k}(\eta^{(i)})^2||G^{(i)}||_2^2.
	\end{equation}
	and can be brought to the RHS:
	\begin{equation}
	Q(\Lambda^*)-Q(\Lambda_\text{best}^{(k)})\leq\dfrac{||\Lambda^{(1)}-\Lambda^*||_2^2+\sum_{i=1}^{k}(\eta^{(i)})^2||G^{(i)}||_2^2}{2\sum_{i=1}^{k}\eta^{(i)}}.
	\end{equation}
	Apply the definition of the step size $\eta^{(i)}:=\frac{\zeta/i}{||G^{(i)}||_2}$ from \eqref{step:STEPupdate} which turns the above inequality into:
	\begin{equation}
	Q(\Lambda^*)-Q(\Lambda_\text{best}^{(k)})\leq\dfrac{||\Lambda^{(1)}-\Lambda^*||_2^2+\zeta^2\sum_{i=1}^{k}1/i^2}{2\zeta\sum_{i=1}^{k}\frac{1/i}{||G^{(i)}||_2}}.
	\end{equation}
	The Assumptions \ref{as:DUALbound} and \ref{as:SUBGRADbound} give an upper bound on the RHS on the above inequality and allows $\overline{G}$ to be factored out:
	\begin{equation}
	Q(\Lambda^*)-Q(\Lambda_\text{best}^{(k)})\leq\overline{G}\dfrac{\overline{\Lambda}^2+\zeta^2\sum_{i=1}^{k}1/i^2}{2\zeta\sum_{i=1}^{k}1/i}.
	\end{equation}
	Taking the limit as number of iterations $k$ approaches infinity makes the RHS approach zero.  Also since the LHS is lower bounded by zero, this results in the best dual value converging to the optimal dual value.
	Since dual the function $Q(\Lambda)$ is continuous, then $\lim\limits_{k\rightarrow\infty}\Lambda_\text{best}^{(k)}=\Lambda^{*}$.
\end{proof}

In ensuring the existence of the optimal solution to \eqref{opt:GENall2D}, we state the following theorem. 

\begin{definition}\label{df:EQL}
	The prices $(\boldsymbol{\pi},\boldsymbol{\lambda},\boldsymbol{\mu})$, LSE DR settings\\ $(\boldsymbol{\alpha},\boldsymbol{\beta},\boldsymbol{\gamma})$, and customer DR settings $(\mathbf{u},\mathbf{v},\mathbf{w})$ are in equilibrium if: (i) $(\boldsymbol{\alpha},\boldsymbol{\beta},\boldsymbol{\gamma})$ are optimal to \eqref{opt:LSEalg2} with prices $(\boldsymbol{\pi},\boldsymbol{\lambda},\boldsymbol{\mu})$, (ii) For all $i\in\mathcal{V}$: $(u_i,v_i,w_i)$ are optimal to \eqref{opt:CUSTalg2} with prices $(\pi_i,\lambda_i,\mu_i)$, and (iii) $(\boldsymbol{\alpha},\boldsymbol{\beta},\boldsymbol{\gamma})=(\mathbf{u},\mathbf{v},\mathbf{w})$.
\end{definition}

\begin{theorem}
	There exists a set of equilibrium prices $(\boldsymbol{\pi},\boldsymbol{\lambda},\boldsymbol{\mu})$, LSE DR settings $(\boldsymbol{\alpha},\boldsymbol{\beta},\boldsymbol{\gamma})$, and customer DR settings $(\mathbf{u},\mathbf{v},\mathbf{w})$. Additionally, the set is optimal to \eqref{opt:GENall2D}.
\end{theorem}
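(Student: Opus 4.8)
The plan is to recognize that an equilibrium in the sense of Definition~\ref{df:EQL} is exactly a primal--dual optimal pair (a saddle point) of the partial Lagrangian obtained by dualizing only the affine consistency constraints~\eqref{const:SEPARATE}, while keeping the capacity constraints~\eqref{const:CAPpolicy1}--\eqref{const:CAPpolicy2} inside the LSE block. Thus the theorem reduces to showing that the convex program~\eqref{opt:GENall2D} attains its primal optimum and admits optimal multipliers $(\boldsymbol{\pi}^*,\boldsymbol{\lambda}^*,\boldsymbol{\mu}^*)$ for~\eqref{const:SEPARATE} with zero duality gap. First I would record that~\eqref{opt:GENall2D} is convex: it is an equivalent reformulation of the convex problem~\eqref{opt:GENall2}, and the only constraints added, \eqref{const:SEPARATE}, are affine, so both the objective and the feasible set remain convex.

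Next I would establish strong duality together with dual attainment via Slater's condition. A strictly feasible point is easy to exhibit: take $\boldsymbol{\alpha}=\boldsymbol{\beta}=\boldsymbol{\gamma}=\mathbf{u}=\mathbf{v}=\mathbf{w}=\mathbf{0}$ (which satisfies the affine equalities) and choose $\kappa$ strictly larger than the worst-case magnitude of $D$; this magnitude is finite because $\delta_i$ and $\delta_r$ are bounded by the demand and renewable bounds stated in the model, so the inequality constraints~\eqref{const:CAPpolicy1}--\eqref{const:CAPpolicy2} hold strictly. Since the remaining constraints are affine and the optimal value is finite (the customer and generation cost terms are nonnegative, as $\hat{C}_i$ and $C_g$ attain their minimum value $0$ at the origin, and $C_{\text{cap}}$ is bounded below over the admissible range of $\kappa$), Slater's condition yields strong duality and guarantees that the dual optimum is attained, i.e. optimal prices $(\boldsymbol{\pi}^*,\boldsymbol{\lambda}^*,\boldsymbol{\mu}^*)$ exist.

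I would then argue primal attainment, which I expect to be the main obstacle. The feasible set is closed and convex, so it suffices to exhibit a nonempty compact sublevel set of the objective. The variable $\kappa$ is bounded below by the feasibility constraints and bounded above because $C_{\text{cap}}$ is increasing, so any sublevel set confines $\kappa$ to a compact interval. The policy variables are tied together by~\eqref{const:SEPARATE} and enter the objective only through the convex cost functions; whenever these grow at infinity (which holds for the quadratic costs of primary interest, and more generally under a mild coercivity assumption on $\hat{C}_i$ and $C_g$) the sublevel sets are bounded, hence compact, and a primal minimizer $(\boldsymbol{\alpha}^*,\boldsymbol{\beta}^*,\boldsymbol{\gamma}^*,\mathbf{u}^*,\mathbf{v}^*,\mathbf{w}^*,\kappa^*)$ exists. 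Establishing this coercivity cleanly for fully general convex costs (where linear growth can preclude attainment) is the delicate point; for the quadratic/linear setting used throughout the paper it is immediate.

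Finally I would assemble the pieces. The partial Lagrangian $L$ separates additively into an LSE part (everything not indexed by a customer, namely $C_{\text{cap}}(\kappa)+\E[C_g(\cdots)]$, plus $\sum_i(\pi_i\alpha_i+\lambda_i\beta_i+\mu_i\gamma_i)$) and per-customer parts ($\E[\hat{C}_i(u_iD+v_i\delta_i+w_i)]-\pi_iu_i-\lambda_iv_i-\mu_iw_i$), because the pairing of the prices with $(\boldsymbol{\alpha},\boldsymbol{\beta},\boldsymbol{\gamma})-(\mathbf{u},\mathbf{v},\mathbf{w})$ is separable across blocks. Hence minimizing $L(\cdot;\boldsymbol{\pi}^*,\boldsymbol{\lambda}^*,\boldsymbol{\mu}^*)$ over the primal variables decomposes precisely into the LSE subproblem~\eqref{opt:LSEalg2} and the customer subproblems~\eqref{opt:CUSTalg2}. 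By the saddle-point theorem for convex programs with zero duality gap, the primal minimizer together with the optimal prices is a saddle point of $L$; its primal component is therefore simultaneously optimal for~\eqref{opt:LSEalg2} (giving condition (i) of Definition~\ref{df:EQL}) and for each~\eqref{opt:CUSTalg2} (condition (ii)), while primal feasibility of~\eqref{const:SEPARATE} gives the consistency $(\boldsymbol{\alpha}^*,\boldsymbol{\beta}^*,\boldsymbol{\gamma}^*)=(\mathbf{u}^*,\mathbf{v}^*,\mathbf{w}^*)$ (condition (iii)). Strong duality then certifies that this equilibrium is optimal to~\eqref{opt:GENall2D}, completing the argument.
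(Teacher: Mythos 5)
Your proposal is correct, and it rests on the same structural observation the paper uses --- that the Lagrangian of \eqref{opt:GENall2D} obtained by dualizing the consistency constraints \eqref{const:SEPARATE} separates additively into the LSE subproblem \eqref{opt:LSEalg2} and the per-customer subproblems \eqref{opt:CUSTalg2}, so that an equilibrium in the sense of Definition~\ref{df:EQL} is exactly a primal--dual optimal pair. Where you diverge is in how optimality is certified and, more importantly, in how existence is handled. The paper argues through KKT conditions: it shows that the KKT system of \eqref{opt:GENall2D} coincides with the union of the KKT systems of \eqref{opt:LSEalg2} and \eqref{opt:CUSTalg2} together with $(\boldsymbol{\alpha},\boldsymbol{\beta},\boldsymbol{\gamma})=(\mathbf{u},\mathbf{v},\mathbf{w})$, invokes convexity to make KKT necessary and sufficient, and then simply asserts that ``there clearly exists an optimal point'' of \eqref{opt:GENall2D}. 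You instead work with saddle points of the partial Lagrangian and supply precisely the pieces the paper takes for granted: dual attainment via the refined Slater condition (your strictly feasible point with zero policy variables and large $\kappa$ is valid, since boundedness of $d_i$ and $r$ makes $D$ bounded), and primal attainment via compactness of sublevel sets, which you correctly flag as the genuinely delicate step --- it is immediate for the quadratic/linear costs the paper actually uses but can fail for general convex costs with linear growth. In short, the paper's KKT route is shorter given the unproven existence claims; your duality route buys a justification of both the existence of equilibrium prices (which implicitly requires a constraint qualification the paper never mentions) and the existence of a primal optimum, at the cost of needing a coercivity caveat.
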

\begin{proof}
	If one takes the Lagrangian of \eqref{opt:GENall2D}, it can be noticed that it equals the sum of LSE's Lagrangian of \eqref{opt:LSEalg2} and all the customers' Lagrangians of \eqref{opt:CUSTalg2}.
	Also, the set of primal feasible solutions of \eqref{opt:GENall2D} is a subset of primal feasible solutions of \eqref{opt:LSEalg2} and \eqref{opt:CUSTalg2} for each customer $i$.
	Additionally, the dual feasible set and complementary slackness constraints coming from \eqref{const:CAPpolicy1}\eqref{const:CAPpolicy2} applies to both \eqref{opt:GENall2D} and \eqref{opt:LSEalg2}.
	Therefore a solution that satisfies the Karush-Kuhn-Tucker (KKT) conditions of \eqref{opt:GENall2D} also simultaneously satisfies the KKT conditions of \eqref{opt:LSEalg2} and \eqref{opt:CUSTalg2}, with the condition that $(\boldsymbol{\alpha},\boldsymbol{\beta},\boldsymbol{\gamma})=(\mathbf{u},\mathbf{v},\mathbf{w})$.
	An equilibrium point from Definition \ref{df:EQL} must also satisfy KKT conditions of \eqref{opt:LSEalg2} and \eqref{opt:CUSTalg2}, with the condition that $(\boldsymbol{\alpha},\boldsymbol{\beta},\boldsymbol{\gamma})=(\mathbf{u},\mathbf{v},\mathbf{w})$.   Since \eqref{opt:GENall2D},\eqref{opt:LSEalg2}, and \eqref{opt:CUSTalg2} are convex optimization problems, the KKT conditions are both necessary and sufficient for optimality.  Since there clearly exists an optimal point of \eqref{opt:GENall2D}, then there must exist an equilibrium point.  It can be checked that the KKT conditions of \eqref{opt:LSEalg2} and \eqref{opt:CUSTalg2}, along with $(\boldsymbol{\alpha},\boldsymbol{\beta},\boldsymbol{\gamma})=(\mathbf{u},\mathbf{v},\mathbf{w})$ are equivalent to the KKT conditions of \eqref{opt:GENall2D}.  Therefore an equilibrium point is optimal to \eqref{opt:GENall2D}.
\end{proof}

\end{document}